\documentclass[12pt,reqno]{amsart}

\usepackage{a4wide,amscd,amsmath,amssymb,amsthm}
\usepackage[all]{xy}

\theoremstyle{plain}
\newtheorem{thm}{Theorem}[section]
\newtheorem{prop}[thm]{Proposition}
\newtheorem{lem}[thm]{Lemma}
\newtheorem{dfnlem}[thm]{Definition and Lemma}
\newtheorem{cor}[thm]{Corollary}
\newtheorem{add}[thm]{Addendum}

\theoremstyle{definition}
\newtheorem{dfn}[thm]{Definition}
\newtheorem{eg}[thm]{Example}
\newtheorem{obs}[thm]{Remark}

\def\quo#1#2{\raisebox{.5ex}{{#1}\!}/\raisebox{-.5ex}{\!{#2}}}

\def\type#1{\mathbf{type}^D_{#1}}
\def\CW#1#2{\mathbf{CW}_{#1}^{#2}}
\def\coefD{\mathbf{Coef}^D}
\def\coef#1{\mathbf{Coef}^{#1}}
\def\twistD{\mathbf{Twist}^D}
\def\twist#1#2{\mathbf{Twist}_{#1}^{#2}}
\def\D{\mathcal D}
\def\G{\mathcal G}
\def\M{\mathcal M}

\def\A{\mathbf{A}}
\def\B{\mathbf{B}}
\def\Top{\mathbf{Top}^D_c}
\def\Topdash{\mathbf{Top}^{D'}_c}
\def\coneD{\mathbf{cone}^D}

\def\prinD{\mathbf{Prin}^D}

\def\gr#1{\mathbf{Gr}^{#1}}
\def\grd#1{\mathbf{Grd}^{#1}}
\def\cross#1#2{\mathbf{cross}_{#1}^{#2}}
\def\modD{\mathbf{mod}^D}

\def\modhat{\mathbf{mod \hat{\phantom{x}}}}
\def\mod#1{\mathbf{mod}({#1})}
\def\chainD{\mathbf{chain}^D}
\def\qud#1#2{\mathbf{quad}_{#1}^{#2}}
\def \rqud#1#2{\mathbf{rquad}_{#1}^{#2}}

\def\ob#1{\text{Ob}(#1)}

\def\Z{\ensuremath{\mathbb{Z}}}

\usepackage{graphicx}
\usepackage{ifpdf}
\ifpdf
   \DeclareGraphicsRule{*}{mps}{*}{}
   \fi

\title{Presentation of Homotopy Types under a Space}
\date{\today}
\begin{document}

\author{Hans-Joachim Baues and Beatrice Bleile}

\address{Max Planck Institut f{\"u}r Mathematik\\
Vivatsgasse 7\\
D--53111 Bonn, Germany}

\email{baues@mpim-bonn.mpg.de}

\address{Second author's home institution: School of Science and Technology\\
University of New England\\
NSW 2351, Australia}

\email{bbleile@une.edu.au}

\begin{abstract}
We compare the structure of a mapping cone in the category $\mathbf{Top}^D$ of spaces under a space $D$ with differentials in algebraic models like crossed complexes and quadratic complexes. Several subcategories of $\mathbf{Top}^D$ are identified with algebraic categories. As an application we show that there are exactly 16 essential self--maps of $S^2\times S^2$ fixing the diagonal. 
\end{abstract}

\subjclass[2000]{55-02, 55P05, 55P15, 55Q15, 55Q35, 55U35, 18B40}
\keywords{Presentation of spaces, suspensions and mapping cones under a space, attaching cells, groupoids, crossed modules, quadratic modules, self--maps of products fixing the diagonal}

\maketitle

\section*{Introduction}
In this paper we consider the homotopy theory of spaces and maps under a space, $D$, and homotopies relative $D$. In particular, we investigate the presentation of a space $X$ as mapping cone 
of a map $\partial_X$ under $D$. Certain relative CW--complexes $(X,D)$ are such mapping cones and the homotopy type of $X$ under $D$ is determined by the homotopy type of its presentation $\partial_X$ in the category of twisted maps. The main results describe algebraic models of these presentations in low dimensions corresponding to the boundary maps of crossed complexes and quadratic complexes, respectively. As we are working under a space, $D$, we obtain several new results which are known for the case where $D  = \ast$ is a point. As an application of quadratic complexes we show that there are exactly 16 essential self--maps of the product $S^2 \times S^2$ fixing the diagonal, and compute the monoid formed by these self--maps.

\section{Presentation of Spaces under $D$}\label{underD}
Given a space $D$, the objects of the category $\Top$ are pairs $X = (X, D)$, where $D \rightarrowtail X$ is a cofibration. Morphisms are maps under $D$, that is, commutative diagrams
\[\xymatrix{
& D \ar@{>->}[dl] \ar@{>->}[dr] & \\
X \ar[rr]_f && Y.}\]
The relative cylinder $I X$ is  determined by the pushout diagram
\[\xymatrix{
I \times D \ar[d] \ar[r]^p & D \ar[d] \\
I \times X \ar[r] & I X, }\]
where $I = [1,0]$ is the unit interval and $p$ is the projection onto the second factor. The maps $i_t: X\rightarrow I X$ are induced by $X \rightarrow I \times X, x \mapsto (t,x)$. A homotopy relative $D$ is a map $I X \rightarrow Y$ under $D$. Homotopy relative $D$ is a natural equivalence relation on the category $\Top$ and we write $[X, Y]^D$ for the set of homotopy classes of maps $X \rightarrow Y$ relative $D$. In other words, $[X, Y]^D$ is the set of morphims in the resulting homotopy category \quo{$\Top$}{$\simeq$}. Note that $\Top$ has initial object $\ast = (D,D)$, given by the identity on $D$. The category $\Top$ allows arbitrary coproducts with the coproduct of $X$ and $Y$ given by the adjunction space $X \vee Y = X \cup_D Y$. In fact, $\Top$ is an I--category and a cofibration category in the sense of \cite{AH}.

A map $f: D \rightarrow D'$ induces the push forward functor
\[ f_{\ast}: \quo{$\Top$}{$\simeq$} \longrightarrow \quo{$\Topdash$}{$\simeq$}\]
which takes $(X, D)$ to the pushout $X\cup_f D'$. The functor $f_{\ast}$ is an equivalence of homotopy categories if $f$ is a homotopy equivalence.

A \emph{based} object in $\Top$ is a cofibration $D \rightarrowtail A$ together with a retraction denoted by $0_A$. The \emph{cone}, $CA$, of the based object, $A$, is the pushout
\[\xymatrix{
A \ar[r]^{0_A} \ar[d]_{i_1} & D \ar[d] \\
IA \ar[r] & CA.}\] 
The \emph{suspension}, $\Sigma A$, of the based object, $A$, is the pushout
\[\xymatrix{
A \ar[r]^{0_A} \ar[d]_{i_0} & D \ar[d] \\
CA \ar[r] & \Sigma A.}\] 
Note that $\Sigma A$ is a again based with $0_{\Sigma A}$ induced by the composite $IA \stackrel {p}{\rightarrow} A \stackrel{0_A}{\rightarrow} D$, where $p$ is the projection of the cylinder. Hence the iterated suspension $\Sigma^n A$ is defined for $n \geq 0$. The map
\[\xymatrix{\mu: [0,1] \ar[r]^-{\approx} & [0,2] \ar[r] & [0,1] \cup_{\{1\}} ([1,2]/\{1,2\})}\]
induces the pinching map $CA \longrightarrow CA \vee \Sigma A$ which is a map under $A$. 

Given a based object $A$ and a map $w: A \rightarrow W$ in $\Top$, the \emph{mapping cone} $C_w$ is the pushout
\[\xymatrix{
A \ar[r]^{w} \ar[d] & W \ar[d] \\
CA \ar[r] & C_w.}\] 
The pinching map of $CA$ induces a \emph{coaction map}
\[\mu_{C_w}: C_w \rightarrow C_w \vee \Sigma A,\]
and, for maps $f: C_w \rightarrow Z$ and $\alpha: \Sigma A \rightarrow Z$ in $\Top$, we write $f+\alpha = (f, \alpha)\mu_{C_w}$ to denote the induced action. 

A \emph{$D$--cone} is a mapping cone $X = C_w$, as above, with $W = D$; in this case we write $X' = \Sigma A$ and $X \rightarrow X \vee X'$ for the coaction map of the $D$--cone $X$. A $D$--cone $C_w$ is a suspension if $w = 0_A$ is the trivial map of $A$. We denote the full subcategory of $D$--cones in $\quo{$\Top$}{$\simeq$}$ by $\coneD$. The category $\coneD$ is a theory of coactions as considered in \cite{AH}. If $X''$ is a suspension, a map $\partial_X: X'' \rightarrow X$ in $\coneD$ is called a \emph{presentation}. Such presentations are the objects in the category $\twistD$. Morphisms, $\partial_X \rightarrow \partial_Y$, in $\twistD$ are commutative diagrams 
\[\xymatrix{
X'' \ar[r]^-{f''} \ar[d]_{\partial_X} & Y'' \vee Y \ar[d]^{(\partial_Y,1)}  \\
X \ar[r]_f & Y}\]
in $\coneD$ with $(0_{Y''}, 1) f''= 0_{X''}$. Composition is defined by $(f'',f) (g'',g) = (\overline{f}g'',fg)$, where $\overline{f} = (f'', i_Y f): X'' \vee X \rightarrow Y'' \vee Y$. The category $\prinD$ is the subcategory of \emph{principal maps} in $\twistD$ consisting of presentations, $\partial_X: X'' \rightarrow X$, and morphisms, $(f'',f)$, given by commutative diagrams
\[\xymatrix{
X'' \ar[r]^-{f''} \ar[d]_{\partial_X} & Y'' \ar[d]^{\partial_Y}  \\
X \ar[r]_f & Y}\]
in $\coneD$.

Given presentations, $\partial_X$ and $\partial_Y$, the map $f: X \rightarrow Y$ in $\coneD$ is \emph{$\partial$--compatible}, if there is a map $f'': X'' \rightarrow Y'' \vee Y$, such that $(f'', f)$ is a morphism in $\twistD$. Two maps $f, f_1: X \rightarrow Y$ are \emph{$\partial$--equivalent}, $f \sim f_1$, if there is a map $\alpha: X' \rightarrow Y''\vee Y$, with $(0_{Y''}, 1)\alpha = 0_{X'}$, such that
\[f_1 = f + (\partial_Y,1)\alpha.\]
The objects of the category $\coefD$ are the same as the objects of $\twistD$, namely presentations $\partial_X: X'' \rightarrow X$. Morphisms in $\coefD$ are $\partial$--equivalence classes, $\{f\}: \partial_X \rightarrow \partial _Y$, of $\partial$--compatible maps, $f: X \rightarrow Y$, with composition given by $\{f\}\{g\} = \{fg\}$. Identifying the $D$--cone $X$ with the presentation $\ast \rightarrow X$, where $\ast$ is the initial object in $\Top$, we obtain the full inclusion of categories
\[\coneD \subset \coefD.\]

Take a presentation $\partial_X: X'' \rightarrow X$ in $\coneD$, choose a representative $\partial_X'$ of the homotopy class $\partial_X$ and denote the mapping cone of $\partial_X'$ by $X_2$. Then there is an inclusion $i: X \rightarrow X_2$ and $\partial_X$ is called a \emph{presentation of the homotopy type} of $X_2$. 

\begin{obs}\label{maponX_2}
Given a morphism $(f'', f): \partial_X \rightarrow \partial_Y$ in $\twistD$, there is a \emph{twisted map} $\overline f: X_2 \rightarrow Y_2$ associated with $(f'',f)$, we refer the reader to V.2.3 in Baues \cite{AH} for the construction of $\overline f$. Thus a $\partial$--compatible map, $f: X \rightarrow Y$ in $\coneD$, yields a twisted map $\overline f: X_2 \rightarrow Y_2$ extending $f$, that is, $i_{\ast}f = i^{\ast} \overline f$. If $f \sim f_1$ are $\partial$--equivalent, then $if \simeq if_1$ relative $D$.
\end{obs}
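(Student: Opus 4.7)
The plan is to handle the three assertions in order: cite Baues V.2.3 for the construction of the twisted map $\overline f$, verify the extension property by inspection, and then prove the homotopy invariance using the cone structure of $Y_2$.

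First, I would unpack the construction of $\overline f$ only to the extent needed. Recall that $X_2 = X \cup_{\partial_X'} CX''$ is the mapping cone of a chosen representative $\partial_X'$, and similarly for $Y_2$. The construction in V.2.3 of \cite{AH} defines $\overline f : X_2 \to Y_2$ by setting $\overline f \circ i_X = i_Y \circ f$ on the subspace $X$ and extending across the cone $CX''$ using $f''$ together with a chosen nullhomotopy in $\coneD$ witnessing $(\partial_Y, 1) f'' \simeq f \partial_X$, which is available because $(f'', f)$ is a morphism in $\twistD$. The extension property $i_* f = i^* \overline f$ is then immediate from the construction.

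Second, for the homotopy invariance, assume $f_1 = f + (\partial_Y, 1) \alpha$ with $\alpha : X' \to Y'' \vee Y$ satisfying $(0_{Y''}, 1) \alpha = 0_{X'}$. Naturality of the coaction in the target yields $i_Y f_1 = i_Y f + i_Y (\partial_Y, 1) \alpha$ in $[X, Y_2]^D$. Since $i_Y \circ (\partial_Y, 1) = (i_Y \partial_Y, i_Y) : Y'' \vee Y \to Y_2$ and $i_Y \partial_Y : Y'' \to Y_2$ is nullhomotopic relative $D$ with canonical nullhomotopy provided by the inclusion $CY'' \subset Y_2$, one may replace $i_Y \partial_Y$ by the composite $0_{Y''} : Y'' \to D \hookrightarrow Y_2$, obtaining
\[ i_Y (\partial_Y, 1) \alpha \;\simeq\; (0_{Y''}, i_Y) \alpha \;=\; i_Y \circ (0_{Y''}, 1) \alpha \;=\; i_Y \circ 0_{X'} \;=\; 0, \]
so that $i f_1 \simeq i f$ relative $D$.

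The main obstacle is the bookkeeping with the wedge and the coaction: one must check that the nullhomotopy of $i_Y \partial_Y$ passes through $Y'' \vee Y$ compatibly with the coaction, so that the chain of equivalences above is legitimate. This is where the I--category structure of $\Top$ in the sense of \cite{AH} is used, and beyond this point the argument is a direct verification once the construction from V.2.3 is in hand.
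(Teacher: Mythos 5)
Your proposal is correct and matches the paper's intent: the paper offers no proof beyond citing V.2.3 of \cite{AH} for the construction of $\overline f$, and the details you supply (extension over the cone via the homotopy $(\partial_Y,1)f''\simeq f\partial_X$, and the nullhomotopy of $i_Y\partial_Y$ through $CY''\subset Y_2$ combined with $(0_{Y''},1)\alpha=0_{X'}$ to kill the correction term) are exactly the standard verification. No gaps.
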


A relative CW--complex, $X = (X, D)$, under a space $D$ is given by skeleta, $(X^n, D)$, obtained inductively by attaching $n$--cells to $(X^{n-1}, D)$, where $n \geq 0$ and $X^{-1} = D$. We say that $(X, D)$ is \emph{pointed}, if $D$ has a base point $\ast$, $X^0 = D$, all attaching maps preserve base points and $1$--cells are attached via the map $S^0 \rightarrow \ast$. We denote the full subcategories of $\quo{$\Top$}{$\simeq$}$ consisting of relative and pointed relative CW--complexes by $\CW{}{D}$ and $\CW{}{D, \ast}$, respectively. Note that every relative CW--complex, $(X, D)$, for which $D$ and $D \rightarrowtail X$ are connected, is equivalent in $\quo{$\Top$}{$\simeq$}$ to a pointed relative CW--complex.

\section{Attaching $n$--cells}\label{ncells}
Let $\CW{n}{D}$ ($\CW{n}{D,\ast}$) be the full subcategory of $\Top/\simeq$ consisting of (pointed) relative CW--complexes $X = (X,D)$ with cells in dimension $n$ only.
\begin{lem}\label{CWnDisfull}
For $n \geq 1, \CW{n}{D}$ is a full subcategory of the category $\coneD$ of $D$--cones. Up to equivalence of categories, the category $\CW{n}{D}$ depends on the $n$--type of $D$ only. The same holds for $\CW{n}{D,\ast}$.
\end{lem}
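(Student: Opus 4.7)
The plan falls into two parts: first realise every $X \in \CW{n}{D}$ explicitly as a $D$--cone, and then compare $\CW{n}{D}$ with $\CW{n}{D'}$ along any $n$--equivalence $g \colon D \to D'$.

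For the first part, write $X = D \cup_f \bigl( \bigvee_\alpha D^n \bigr)$ for an attaching map $f \colon \bigvee_\alpha S^{n-1} \to D$. I would introduce the based object $A = D \vee \bigvee_\alpha S^{n-1}$ in $\Top$, with retraction $0_A$ collapsing the spheres to a point of $D$, and the map $w \colon A \to D$ equal to $\id_D$ on $D$ and to $f$ on the spheres. Unwinding the definitions of $IA$, $CA$ and $C_w$, a direct comparison of the relevant pushouts identifies $C_w$ with $X$ under $D$, so $X$ inherits a $D$--cone structure. Since $\CW{n}{D}$ and $\coneD$ are by construction both full subcategories of the homotopy category of $\Top$, fullness of the inclusion $\CW{n}{D} \subseteq \coneD$ is then automatic.

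For the second part, fix an $n$--equivalence $g \colon D \to D'$ and consider the push--forward functor $g_\ast$ defined in Section \ref{underD}. It restricts to a functor $\CW{n}{D} \to \CW{n}{D'}$, since it takes $D \cup_f \bigvee D^n$ to $D' \cup_{g \circ f} \bigvee D^n$. Essential surjectivity reduces to lifting attaching maps: surjectivity of $g_\ast \colon \pi_{n-1}(D) \to \pi_{n-1}(D')$ yields, for each $f' \colon \bigvee S^{n-1} \to D'$, a map $f \colon \bigvee S^{n-1} \to D$ with $g \circ f \simeq f'$, whence $g_\ast(D \cup_f \bigvee D^n) \simeq D' \cup_{f'} \bigvee D^n$. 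For full faithfulness I would show that the map $[X,Y]^D \to [g_\ast X, g_\ast Y]^{D'}$ is bijective by extension--and--obstruction theory: a map $X \to Y$ under $D$ amounts to a collection of disc fillings in $Y$ of the $n$--cell attaching maps of $X$, and both the existence and the homotopy classification of such fillings are controlled by homotopy data in degrees $\leq n$, which by hypothesis are preserved by $g$.

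The main obstacle is the full faithfulness step, where one must check carefully that the relevant obstruction--theoretic invariants really depend only on the $n$--type of $D$; relative cellular approximation in $\Top$ together with a Whitehead--type argument take care of this. The pointed statement for $\CW{n}{D,\ast}$ is handled by repeating the same argument with base--point--preserving attaching maps and lifts throughout, with the case $n = 1$ trivialising because $1$--cells are then attached via the constant map $S^0 \to \ast$.
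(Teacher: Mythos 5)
Your argument is correct and its first half coincides with the paper's: one realises $X\in\CW{n}{D}$ as a mapping cone $C_w$ of a map $w$ from a based object built from $D$ and the attaching spheres, and fullness is then automatic because both $\CW{n}{D}$ and $\coneD$ are full subcategories of $\quo{$\Top$}{$\simeq$}$. One small imprecision: for the unpointed category $\CW{n}{D}$ the cells are not attached at a common basepoint, so the based object should be $A_0\times S^{n-1}\cup_d D$ for a discrete set $A_0$ and a map $d\colon A_0\to D$ (as the paper takes it, namely the $(n-1)$--fold suspension of $D\sqcup A_0$), rather than a reduced wedge $D\vee\bigvee_\alpha S^{n-1}$; your version is literally correct only in the pointed case. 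For the second half you diverge mildly from the paper: you run a cell-by-cell obstruction argument for the push--forward functor $g_\ast$, whereas the paper takes $g\colon D\to D'$ to be the map to the Postnikov $n$--stage, observes that $D'$ may be obtained from $D$ by attaching cells of dimension $\geq n+2$, and gets the equivalence in one stroke from cellular approximation. The two arguments prove the same thing; the paper's is shorter because the cellular model makes fullness and faithfulness simultaneous, while yours makes explicit which homotopy groups control each step. On that point, be careful with the word ``$n$--equivalence'': for faithfulness you need the comparison map to induce isomorphisms on $\pi_i$ for all $i\leq n$ (the set of disc fillings of an attaching map, when nonempty, is a torsor over $\pi_n$), so surjectivity on $\pi_{n-1}$, which suffices for essential surjectivity, is not enough for the whole argument; this is automatic if, as in the paper, you compare $D$ with its Postnikov $n$--stage, and it is the correct reading of ``same $n$--type''.
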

\begin{proof}
Consider based objects under $D$ given by the disjoint union of $D$ and a discrete set, $A_0$, so that the trivial map, $0_A$, is determined by a map $d: A_0 \rightarrow D$. The iterated suspension is the adjunction space, $\Sigma^{n-1}A = A_0 \times S^{n-1} \cup_d D$, with trivial map determined by $A_0 \times S^{n-1} \rightarrow A_0 \stackrel{d}{\rightarrow} D$. Given $X = (X,D)$ in $\CW{n}{D}$, let $A_0$ be the set of $n$--cells of $(X,D)$. Then the attaching maps in $(X,D)$ yield a map $w: \Sigma^{n-1}A \rightarrow D$, such that $X = C_w$. Now let $f: D \rightarrow D'$ be the $n$--type of $D$. Then we may assume that we obtain $D'$ from $D$ by attaching cells of dimension $\geq n+2$. Thus the push forward functor yields an equivalence $\CW{n}{D} \stackrel{\sim}{\rightarrow} \CW{n}{D'}$ by the cellular approximation theorem.
\end{proof}

\begin{eg}\label{egast}
Let $D = \ast$ be a point. A presentation $\partial = \partial_X$ in $\CW{1}{\ast}$ is a map between $1$--point unions of $1$--spheres, and hence corresponds to a homomorphism $\partial_X: X'' \rightarrow X$ of free groups. Such a homomorphism is a presentation of the group $G = \quo{$X$}{$N(\partial_X(X''))$}$, where $N(\partial_X(X''))$ is the normal subgroup in $G$ generated by $\partial_X(X'')$. We leave it to the reader to verify that $\partial$--equivalence classes of $\partial$--compatible maps, $\partial_X \rightarrow \partial_Y$, between such presentations $\partial_X$ of the group $G$ and $\partial_Y$ of the group $H$, respectively, correspond to group homomorphisms $G \rightarrow H$. We conclude that the set of morphisms $\partial_X \rightarrow \partial_Y$ in $\coef{\ast}$ coincides with the set of group homomorphisms $G \rightarrow H$, and that the full subcategory of $\coef{\ast}$ of presentations in $\CW{1}{\ast}$ is equivalent to the category of groups.
\end{eg}

\begin{eg}\label{egdiscrete}
For a discrete set, $D$, the full subcategory of $\coefD$ consisting of presentations in $\CW{1}{D}$ is equivalent to the category $\grd{D}$ of groupoids with object set $D$ and morphisms given by functors which are the identity on $D$. 
\end{eg}

We denote the categories of groups and groupoids by $\gr{}$ and $\grd{}$, respectively. For a group, $G$, and a groupoid, $\G$, let $\gr{G}$ and $\grd{\G}$ be the categories of objects under $G$ and $\G$, respectively. Free objects in $\gr{G}$ and $\grd{\G}$ are also called \emph{cofibrant} objects and we denote the subcategories of cofibrant objects by $\gr{G}_c$ and $\grd{\G}_c$, respectively. Note that an object $G \rightarrow H$ in $\gr{G}$ is cofibrant if $H$ is a coproduct in $\gr{}$ of $G$ and a free group.

\begin{prop}
Let $\G$ be a groupoid representing the $1$--type of $D$. Then there is an equivalence of categories
\[ \tau: \CW{1}{D} \stackrel{\sim}{\longrightarrow} \grd{\G}_c.\]
If $D$ is connected and pointed, let $G = \pi_1(D)$ be the fundamental group. Then there is a corresponding equivalence of categories 
\[ \tau: \CW{1}{D,\ast} \stackrel{\sim}{\longrightarrow} \gr{G}_c.\]
\end{prop}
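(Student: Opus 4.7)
The plan is to use Lemma~\ref{CWnDisfull} to replace $D$ by a convenient model of its $1$--type, namely the classifying space $D = B\G$, in which $\Pi_1(D)$ with object set $\ob{\G} \subset D$ is isomorphic to $\G$. Given $X = (X,D)$ in $\CW{1}{D}$ with set of $1$--cells $A_0$ and attaching map $w: A_0 \times S^0 \to D$ (which, by cellular approximation, we may assume factors through $\ob{\G}$), set $\tau(X) := \Pi_1(X, \ob{\G})$. The inclusion $D \hookrightarrow X$ exhibits $\tau(X)$ as an object under $\G$, and van Kampen for groupoids identifies $\tau(X)$ with the groupoid obtained from $\G$ by freely adjoining one arrow per $1$--cell, with source and target specified by $w$. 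Hence $\tau(X) \in \grd{\G}_c$, and $D$--homotopy invariance of $\Pi_1$ makes $\tau$ a well-defined functor on $\CW{1}{D}$.

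For essential surjectivity, any cofibrant $\G \to \mathcal{H}$ is a free extension of $\G$ by a family of arrows $\{a_i: s_i \to t_i\}_{i \in A_0}$ with $s_i, t_i \in \ob{\G}$; realizing these generators geometrically by attaching one $1$--cell per generator along the obvious map $A_0 \times S^0 \to D$ produces $X \in \CW{1}{D}$ with $\tau(X) \cong \mathcal{H}$. For full faithfulness, a morphism $\phi: \tau(X) \to \tau(Y)$ in $\grd{\G}$ is determined by its values $\phi(a_i) \in \tau(Y)$ on the generating arrows; choosing for each $a_i$ a path in $Y$ from $s_i$ to $t_i$ representing $\phi(a_i)$ and sending the $i$--th $1$--cell of $X$ to this path yields a $D$--map $f: X \to Y$ with $\tau(f) = \phi$. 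Different choices of representing paths are path-homotopic rel endpoints in $Y$, and hence yield $D$--homotopic maps on $X$.

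The main obstacle is the full-faithfulness step: existence of a lifting map is straightforward once we note that $Y$ itself is $1$--dimensional relative to $D$, so every element of $\tau(Y)$ is literally represented by a path; the delicate point is assembling path-homotopies in $Y$ rel endpoints into honest $D$--homotopies $IX \to Y$ on all of $X$, and conversely showing that any $D$--homotopy restricts to a path-homotopy on each $1$--cell, so that $\tau$ induces a bijection on homotopy classes. Finally, the pointed case is a direct specialization: for connected pointed $D$, the groupoid $\G$ is equivalent to the one-object groupoid $G = \pi_1(D,\ast)$, and the condition that all $1$--cells of $X$ be attached via $S^0 \to \ast$ forces $\tau(X)$ to be obtained from $G$ by freely adjoining loops at $\ast$, i.e.\ a coproduct $G \ast F$ in $\gr{}$, yielding the equivalence $\tau: \CW{1}{D,\ast} \stackrel{\sim}{\longrightarrow} \gr{G}_c$.
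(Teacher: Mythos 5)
Your proposal is correct and follows exactly the route the paper intends: the paper's entire proof is the instruction to ``use the van Kampen theorem in its general form'' (the groupoid version of Brown--Higgins--Sivera), and your argument is a faithful unpacking of that, including the reduction via Lemma~\ref{CWnDisfull}, the identification of $\Pi_1(X,\ob{\G})$ as a free extension of $\G$, and the cell-by-cell construction of maps and homotopies for fullness and faithfulness. No essential differences.
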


\begin{proof}
Use the van Kampen theorem in its general form \cite{BHS}.
\end{proof}

\section{Crossed Modules}
To describe algebraic categories equivalent to $\CW{2}{D,\ast}$ and $\CW{2}{D}$, we recall the definition of precrossed and crossed modules over groups and then that of crossed modules over groupoids. 

A \emph{pre--crossed module} $M$ (over the group $M_1$) is a group homomorphism $d= d_{M}:  M_2 \rightarrow M_1$, where $M_1$ acts on $M_2$ from the right, such that $d(x^{m}) = - m + d(x) + m$, for $m \in M_1$ and $x \in M_2$. The \emph{Peiffer commutator} of $x, y \in M_2$ is given by
\[\langle x, y \rangle = - x - y + x + y^{d(x)},\]
and $M$ is a \emph{crossed module} if all Peiffer commutators are trivial. A \emph{morphism between crossed modules} $M = (d_{M}: M_2 \rightarrow M_1)$ and $N = (d_{N}: N_2 \rightarrow N_1)$ is given by a commutative diagram
\[\xymatrix{
M_2 \ar[d]_{q_2} \ar[r]^{d_{M}} & M_1 \ar[d]^{q_1} \\
N_2 \ar[r]_{d_N} & N_1,}\]
in the category of groups, where $q_2$ is $q_1$--equivariant. 

Given a groupoid, $P$, we write $\ob{P}$ for its set of objects. For $p, q \in \ob{P}$, we write $P(p,q)$ for the set of morphisms from $p$ to $q$ and $P(p)$ for $P(p,p)$. A \emph{crossed module over the groupoid $M_1$} is a morphism of groupoids $d_M: M_2 \rightarrow M_1$, such that
\begin{enumerate}
\item $M_2$ is a totally disconnected groupoid with the same objects as $M_1$. Equivalently, $M_2$ may be viewed as a family of groups $\{ M_2(m) \}_{m \in \ob{M_1}}$. We use additive notation for all groups $M_2(m)$;
\item $d_M$ is a family of group homomorphisms $\{ d_M(m): M_2(m) \rightarrow M_1(m) \}_{m \in \ob{M_1}}$;
\item The groupoid $M_1$ acts on $M_2$ from the right. If $x \in M_2(m)$ and $a \in M_1(m,n)$, then $x^a \in M_1(n)$ and the usual axioms of an action are satisfied, that is, $x^{ab} = (x^a)^b$ and $(x+y)^a = x^a+y^a$ for $x, y \in M_2(m)$ and $a, b \in M_1(m, n)$;
\item $d_M$ preserves actions, where  $M_1$ acts on itself by conjugation, that is, $d_M(x^a) = -a + d_M(x) + a$ for all $x \in M_2(m)$ and $a \in M_1(m,n)$ and
\item All Peiffer commutators are trivial, that is, $y^{d_M(x)} = - x + y + x$ for all $x, y \in M_2(m)$.
\end{enumerate}

A \emph{morphism of crossed modules (over groupoids)}, $f: M \rightarrow N$, is a pair of morphisms $f_i: M_i \rightarrow N_i, i = 1, 2$, inducing the same map of objects and compatible with the boundary maps and the actions of both crossed modules.

Take a crossed module $\D = (d_{\D}: \D_2 \rightarrow \D_1)$ over groupoids and let $\cross{2}{\D}$ be the category of crossed modules $M$ under $\mathcal D$ of the form
\[\xymatrix{
\D_2 \ar[d]_{q_2} \ar[r]^{d_{\D}} & \D_1 \ar@{=}[d]^{q_1}  \\
M_2 \ar[r]_{d_{M}} & M_1,}\]
where $q_1$ is the identity. If $\D = (d_{\D}: \D_2 \rightarrow \D_1)$ is a crossed module over groups, we denote the corresponding category of crossed modules over groups under $\D$ by $\cross{2}{\D,\ast}$.

Now we compute the category $\CW{2}{D}$ using \emph{cofibrant} objects in $\cross{2}{\D}$. The object $M$ under $\D$ in $\cross{2}{\D}$ is \emph{cofibrant} if there is a set, $Z_2$, of generators together with a function, $j: Z_2 \rightarrow M_2$, such that, for every crossed module $M'$ under $\D$ and every map $a_Z: Z_2 \rightarrow M'_2$ with $d_{M'}a_Z = d_Mj$, there is a unique map $a: M \rightarrow M'$ under $\D$ such that $a_2 j = a_Z$. We denote the full subcategory of $\cross{2}{\D}$ consisting of cofibrant objects by $\cross{2,c}{\D}$. Similarly, we obtain the category $\cross{2,c}{D,\ast}$ for crossed modules over groups under $\D$.

\begin{prop}\label{CW2dcrossceq}
Let $\D$ be a crossed module over groupoids representing the $2$--type of $D$ with $\D_1$ a free groupoid. Then there is an equivalence of categories
\[ \tau: \CW{2}{D} \stackrel{\sim}{\longrightarrow} \cross{2,c}{\D}.\]
If $D$ is connected and pointed, there is a corresponding equivalence of categories
\[ \CW{2}{D,\ast} \stackrel{\sim}{\longrightarrow} \cross{2,c}{\D,\ast},\]
where $\D_1$ is a free group.
\end{prop}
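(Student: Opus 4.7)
The plan is to adapt the proof of the analogous proposition for $n=1$, replacing the classical van Kampen theorem with its 2-dimensional version of Brown--Higgins \cite{BHS}. By Lemma \ref{CWnDisfull}, $\CW{2}{D}$ depends only on the 2-type of $D$, so we may assume $D$ itself is a 2-dimensional relative CW-complex whose 1-skeleton $D^1$ is a graph (possible because $\D_1$ is free) and whose 2-cells realise the crossed $\D_1$-module $\D_2 \to \D_1$; concretely, we arrange $\D = (\pi_2(D, D^1) \to \pi_1(D^1))$.

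To build $\tau$, take $X = (X, D)$ in $\CW{2}{D}$ and apply cellular approximation to move the attaching maps of the 2-cells of $(X,D)$ into $D^1$, so that $(X, D^1)$ becomes a 2-dimensional absolute CW-pair. I would set
\[
\tau(X) \;=\; \bigl(\pi_2(X, D^1) \to \pi_1(D^1)\bigr),
\]
regarded as a crossed module under $\D$ via the inclusion $D \hookrightarrow X$. By Brown--Higgins, $\pi_2(X, D^1)$ is the free crossed $\pi_1(D^1)$-module on the 2-cells of $(X, D^1)$; since these split as the 2-cells of $(D, D^1)$ (producing $\D_2$) together with the set $Z_2$ of 2-cells of $(X, D)$, the object $\tau(X)$ is the pushout along $\D$ of the free crossed $\D_1$-module on $Z_2$, and hence is cofibrant in $\cross{2}{\D}$. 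Essential surjectivity follows at once: given cofibrant $M$ with generators $j: Z_2 \to M_2$, the boundaries $d_M j(z) \in \D_1 = \pi_1(D^1)$ are realised by loops in $D$ along which one attaches 2-cells to produce $X$ with $\tau(X) \cong M$ under $\D$.

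The main obstacle is showing $\tau$ is fully faithful on morphism sets. Given cofibrant $X, Y$ with $\tau X$ freely generated over $\D_2$ by $Z_2^X$ and $\tau Y$ by $Z_2^Y$, a morphism $f_\ast : \tau X \to \tau Y$ under $\D$ is determined, by cofibrancy, by a $\D_1$-equivariant map $Z_2^X \to (\tau Y)_2$ compatible with boundaries. For each $z \in Z_2^X$, the image prescribes a homotopy class of maps of the characteristic disc of $z$ into $Y$ relative to its boundary, while boundary-compatibility forces the attaching circle of $z$ to map to the prescribed element of $\pi_1(D^1)$. Assembling these data cell-by-cell yields a map $f: X \to Y$ under $D$ inducing $f_\ast$, and uniqueness up to homotopy relative $D$ follows from the same freeness argument applied to the relative cylinder pair $(IX, X \times \partial I \cup_D ID)$. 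This is the 2-dimensional Whitehead classification (compare \cite{AH}, Ch.~V) in the relative groupoid setting, and is where the Brown--Higgins machinery is really needed. The pointed version is proved verbatim, replacing the fundamental groupoid by the fundamental group, $D^1$ by a wedge of circles, and $\cross{2}{\D}$ by $\cross{2}{\D,\ast}$.
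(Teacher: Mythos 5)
Your proposal is correct and follows essentially the same route as the paper: assume $D$ is a CW--complex realising $\D$ as $\pi_2(D,D^1)\rightarrow\pi_1(D^1)$ and set $\tau(X)=\bigl(\pi_2(X,D^1)\rightarrow\pi_1(D^1)\bigr)$, with cofibrancy coming from the free crossed module theorem (Whitehead in the paper, its Brown--Higgins groupoid version in your write-up). The additional detail you supply on essential surjectivity and full faithfulness is sound and merely fills in what the paper leaves implicit.
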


\begin{proof}
We may assume that $D$ is a CW--complex such that the crossed module $\D$ over groupoids representing the $2$--type of $D$ is given by $\pi_2(D, D^1) \stackrel{\partial}{\rightarrow} \pi_1(D^1)$, where $D^1$ is the $1$--skeleton of $D$. Then the functor $\tau$ takes $(Y,D)$ to $\pi_2(Y,D^1) \rightarrow \pi_1(D^1)$, which is a cofibrant crossed module under $\D$ by a theorem of J.H.C. Whitehead \cite{Whitehead3}.
\end{proof}

\section{Quadratic Modules}\label{quadmod}
In this section we describe an algebraic category equivalent to the full subcategory of pointed relative CW--complexes $(X, D)$ with cells in dimensions 2 and 3 only, where $D$ is connected. In general, we say the \emph{dimension}, $\text{dim}(X, D)$, of the relative CW--comlex $(X, D)$ is less than or equal to $n$, if $X = X^n$. Further, $X$ has \emph{trivial $k$--skeleton}, if $X^k = D$. We denote the full subcategory of \quo{$\Top$}{$\simeq$} consisting of relative CW--complexes $(X, D)$ of dimension less than or equal to $n$ with trivial $k$--skeleton by $\CW{k,n}{D}$. To describe the category $\CW{1,3}{D,\ast}$ of pointed relative CW--complexes with cells in dimensions 2 and 3 only, we recall the definition of quadratic modules, see \cite{4dim}. As we are working with pointed relative CW--complexes $(X, D)$, where $D$ is connected, we restrict attention to pre--crossed, crossed and quadratic modules over groups for the remainder of this section. 

Given a pre--crossed module, $d_M: M_2 \rightarrow M_1$, the \emph{Peiffer subgroup, $P_2(d_M)$,} of $d_M$ is the subgroup of $M_2$ generated by Peiffer commutators $\langle x, y \rangle$. Peiffer subgroups generalize commutator subgroups and $P_2(d_M)$ is a normal subgroup of $M_2$. The quotient map $q: M_2 \rightarrow \quo{$M_2$}{$P_2(d_M)$}$ is equivariant with respect to the action of $M_1$ and $d^{cr}_M: M_2^{cr} = \quo{$M_2$}{$P_2(d_M)$} \rightarrow M_1$ is the \emph{crossed module associated with} $d_M$. 

A pre--crossed module $d_M$ is \emph{Peiffer niplotent of class} $2$ or a \emph{$\text{nil}(2)$--module}, if all iterated Peiffer commutators of length $\geq 3$ vanish. Given a $\text{nil}(2)$--module $d_M$, let $C = (M_2^{cr})^{ab}$ be the abelianization of the associated crossed module and let $\{x\} \in C$ denote the class of $x \in M_2$. Then $M_1$ acts on $C \otimes C$ via $(\{x\} \otimes \{y\})^m = \{x^m\} \otimes \{y^m\}$ and 
\[w(\{x\} \otimes \{y\}) = \langle x,y \rangle  = -x-y+x+y^{d_Mx}\]
defines an equivariant homomorphism $w: C \otimes C \longrightarrow M$, called the \emph{Peiffer commutator map}, see IV 1 in \cite{4dim}.
A \emph{quadratic module}, $Q = (\omega, \partial_3, \partial_2)$, is a diagram
\[\xymatrix{
& C \otimes C \ar[dl]_{\omega} \ar[d]^w & \\
Q_3 \ar[r]_{\partial_3} & Q_2 \ar[r]_{\partial_2} & Q_1,}\]
in $\gr{}$, such that
\begin{enumerate}
\item The homomorphism $\partial_2: Q_2 \rightarrow Q_1$ is a $\text{nil}(2)$--module with $C = (Q_2^{cr})^{ab}$ and Peiffer commutator map $w$;
\item The \emph{boundary homomorphisms} $\partial_3$ and $\partial_2$ satisfy $\partial_2\partial_3 = 0$, and the \emph{quadratic map}, $\omega$, is a lift of the Peiffer commutator map, that is, $\partial_3 \omega = w$;
\item $Q_1$ acts on $Q_3$, all homomorphisms are equivariant with respect to the action of $Q_1$ and, for $q \in Q_3, x \in Q_2$, \[q^{\partial_2x} = q + \omega(\{\partial_3q\}\otimes\{x\} + \{x\} \otimes \{\partial_3q\}) \quad \text{and}\]
\item Commutators in $Q_3$ satisfy $(p,q) = -p-q+p+q = \omega(\{\partial_3p\}\otimes\{\partial_3q\})$, for $p, q \in Q_3$.
\end{enumerate}
A \emph{morphism}, $f = (f_3, f_2, f_1): Q \rightarrow Q'$, of quadratic modules is given by a commutative diagram
\[\xymatrix{
C \otimes C \ar[r]^{\omega} \ar[d]_{f_{\ast}} & Q_3 \ar[r]^{\partial_3} \ar[d]_{f_{3}} & Q_2 \ar[r]^{\partial_2} \ar[d]_{f_{2}} & Q_1 \ar[d]^{f_{1}} \\
C' \otimes C' \ar[r]_{\omega'} & Q'_3 \ar[r]_{\partial'_3} & Q'_2 \ar[r]_{\partial'_2} & Q'_1,}\]
where $(f_2, f_1)$ is a morphism of pre--crossed modules inducing $f_{\ast}: C \rightarrow C'$ and $f_3$ is an $f_1$--equivariant homomorphism. Let $\qud{}{}$ be the category of quadratic modules and  define \emph{cofibrations} in $\qud{}{}$ via free extensions as on p. 205 in V.1 of \cite{4dim}. Given a quadratic module $\D$, we denote the category of quadratic modules under $\D$ by $\qud{}{\D}$ and the subcategory of cofibrant objects, $q: \D \rightarrowtail Q$ in $\qud{}{\D}$ with $q_1: D_1 \rightarrow Q_1$ the identity, by $\qud{c}{\D}$. There is a notion of homotopy for morphisms in $\qud{}{}$ defined in \cite{4dim}. Homotopies in $\qud{c}{\D}$ are relative $\D$, that is, they are given by maps $\alpha: Q_1 \rightarrow Q_2$ with $\alpha |_{Q_1} = 0$, and we denote the resulting homotopy category by $\quo{$\qud{c}{\D}$}{$\simeq$}$. As a consequence of Theorem IV.8.1 in \cite{4dim} we obtain

\begin{prop}\label{CW13}
Let $\D$ be a quadratic module representing the $3$--type of the connected pointed space $D$ and assume that $\D$ is free in degrees $1$ and $2$. Then there is an equivalence of categories
\[ \CW{1,3}{D,\ast} \stackrel{\sim}{\longrightarrow} \quo{$\qud{c}{\D}$}{$\simeq$}.\]
\end{prop}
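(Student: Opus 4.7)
The plan is to adapt the proof of Proposition~\ref{CW2dcrossceq} by replacing Whitehead's theorem with Theorem~IV.8.1 of~\cite{4dim}. First I would choose a CW model for $D$ so that the quadratic module $\D$ representing its $3$--type arises from the skeletal filtration of $D$: $\D_1 = \pi_1(D^1)$, $\D_2$ is the $\text{nil}(2)$--module built from $\pi_2(D^2, D^1)$, and $\D_3$ is the group built from $\pi_3(D^3, D^2)$, with Peiffer commutator map and quadratic map defined via commutators and Whitehead products as in Chapter~IV of~\cite{4dim}. The hypothesis that $\D$ is free in degrees $1$ and $2$ is automatic from this cellular construction.

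Next, define the functor $\tau$ on an object $(X,D) \in \CW{1,3}{D,\ast}$. Since $X^1 = D$ and only $2$-- and $3$--cells are attached, set $\tau(X,D)_1 = \D_1$ with $q_1$ the identity, and obtain $\tau(X,D)_2$ and $\tau(X,D)_3$ by applying the same $\pi_2$ and $\pi_3$ constructions to the skeletal filtration of $X$. These extend $\D_2$ and $\D_3$ freely by generators in bijection with the attached $2$-- and $3$--cells, so $\tau(X,D)$ is cofibrant in $\qud{c}{\D}$. A cellular map under $D$ induces a morphism on the relative homotopy groups compatible with the boundary and quadratic structure, yielding the action of $\tau$ on morphisms.

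The central input is Theorem~IV.8.1 of~\cite{4dim}, which in the absolute pointed setting identifies homotopy classes of cellular maps between $3$--dimensional CW--complexes with homotopy classes of morphisms of the associated quadratic modules. The main obstacle I anticipate is transporting this bijection to the under--$D$ setting: once the CW structure on $D$ is fixed, a cellular map under $D$ is a cellular map of ambient $3$--dimensional complexes that restricts to the identity on $D$, and a cellular homotopy relative~$D$ is a homotopy of such maps trivial on~$D$. On the algebraic side, these correspond precisely to maps and homotopies of quadratic modules that are the identity, respectively trivial, on $\D$, which is exactly the notion of homotopy defining \quo{$\qud{c}{\D}$}{$\simeq$}. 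Granting this translation, the full faithfulness of $\tau$ on homotopy classes is a direct consequence of Theorem~IV.8.1.

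Finally, for essential surjectivity, I would realize a cofibrant object $q: \D \rightarrowtail Q$ in $\qud{c}{\D}$ with $q_1$ the identity by using its free generators in degrees $2$ and $3$ to prescribe attaching maps of $2$-- and $3$--cells to $D$, producing $(X,D) \in \CW{1,3}{D,\ast}$ with $\tau(X,D) \cong Q$. Realizability of the required attaching maps again rests on the freeness of $\D$ in degrees $1$ and $2$. Combined with full faithfulness, this delivers the claimed equivalence of categories.
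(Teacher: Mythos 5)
Your proposal is correct and follows essentially the same route as the paper, which justifies Proposition~\ref{CW13} simply as a consequence of Theorem~IV.8.1 of \cite{4dim}; you supply the same key input and flesh out the cellular construction of $\tau$ in parallel with the proof of Proposition~\ref{CW2dcrossceq}. The additional detail you give on transporting the bijection to the under--$D$ setting and on essential surjectivity is consistent with what that theorem provides.
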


As quadratic modules over groupoids are not yet understood, no such result is available for $\CW{1,3}{D}$ in the non--pointed case. See \cite{Ellis} for further topological interpretation of quadratic modules.

\section{$n$--Types under $D$}
An \emph{$n$--type} is a space $X$ with trivial homotopy groups $\pi_i(X, x_0)=0$, for $i > n$ and all base points $x_0 \in X$. Given a space $D$, an \emph{$n$--type under $D$} is a cofibration $j: D \rightarrowtail X$, where $X$ is an $n$--type and the inclusion is an $(n-1)$--equivalence, that is, the induced map $\pi_i(j): \pi_i(D)\rightarrow \pi_i(X)$ is a surjection for $i = n-1$ and an isomorphism for $i < n-1$. The following result describes the category $\type{n}$ of $n$--types under $D$.

\begin{lem}\label{coefeq}
For $n \geq 1$, the full subcategory of $\coefD$ consisting of presentations $\partial_X$ in $\CW{n}{D}$ is equivalent to the category $\type{n}$. If $D$ is connected and pointed, the full subcategory of $\coefD$ consisting of presentations $\partial_X$ in $\CW{n}{D, \ast}$ is equivalent to the category $\type{n}$.
\end{lem}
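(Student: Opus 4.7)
The plan is to exhibit a functor $\tau$ from the full subcategory of $\coefD$ of presentations in $\CW{n}{D}$ to $\type{n}$ by assigning to each presentation its associated Postnikov section, and then to show $\tau$ is essentially surjective, full and faithful. The pointed case runs identically, carrying base points through every step.

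\textbf{Construction of $\tau$.} Given a presentation $\partial_X\colon X''\to X$ in $\CW{n}{D}$, choose a representative $\partial_X'$ and form the mapping cone $X_2$ as in Remark~\ref{maponX_2}. Because $X$ and $X''$ both belong to $\CW{n}{D}$, the pair $(X_2,D)$ is a relative CW--complex with cells only in dimensions $n$ and $n+1$; in particular the inclusion $D\hookrightarrow X_2$ is automatically an $(n-1)$--equivalence. Attach cells of dimension $\geq n+2$ to kill $\pi_i$ for $i>n$, producing an $n$--type under $D$, which we take as $\tau(\partial_X)$. Different representatives and different cell attachments yield spaces canonically equivalent under $D$. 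On morphisms, a $\partial$--compatible map $f\colon X\to Y$ yields a twisted extension $\overline f\colon X_2\to Y_2$ by Remark~\ref{maponX_2}, which extends uniquely up to homotopy under $D$ to a map $\tau(\partial_X)\to\tau(\partial_Y)$ by standard obstruction theory (the obstructions lie in cohomology groups with coefficients in the vanishing higher homotopy of $\tau(\partial_Y)$). Remark~\ref{maponX_2} also guarantees that $\partial$--equivalent maps give homotopic extensions, so $\tau$ is well defined on $\coefD$.

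\textbf{Essential surjectivity.} Let $j\colon D\rightarrowtail Y$ be an $n$--type under $D$. Since $j$ is an $(n-1)$--equivalence, standard CW--approximation under $D$ yields a relative CW--model of $Y$ whose cells lie in dimensions $\geq n$. Let $X\in\CW{n}{D}$ consist of the $n$--cells, mapping to $Y$ via a cellular map inducing a surjection on $\pi_n$. Choose $X''\in\CW{n}{D}$ and a map $\partial_X\colon X''\to X$ whose attaching classes generate the kernel of $\pi_n(X)\to\pi_n(Y)$ under $D$. Then $X_2$ agrees with $Y$ on $\pi_i$ for $i\leq n$ (with $D\hookrightarrow X_2$ an $(n-1)$--equivalence), so $\tau(\partial_X)\simeq Y$ under $D$.

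\textbf{Full and faithful.} For fullness, given a map $g\colon\tau(\partial_X)\to\tau(\partial_Y)$ under $D$, apply cellular approximation to obtain a map $X_2\to\tau(\partial_Y)$ whose image lies in the $(n+1)$--skeleton $Y_2\subseteq\tau(\partial_Y)$; restricting to $X$ gives, by the criterion of Remark~\ref{maponX_2}, a $\partial$--compatible map $f\colon X\to Y$ in $\coneD$ with $\overline f=\bar g$ up to homotopy. For faithfulness, suppose $\partial$--compatible maps $f,f_1\colon X\to Y$ extend to homotopic maps of $n$--types under $D$. Restricting such a homotopy to $X$ and making it cellular produces a map $IX\to Y_2$ under $D$; the resulting obstruction chain on the $(n+1)$--cells of $IX$ defines precisely a map $\alpha\colon X'\to Y''\vee Y$ with $(0_{Y''},1)\alpha=0_{X'}$ and $f_1=f+(\partial_Y,1)\alpha$, i.e.\ a $\partial$--equivalence between $f$ and $f_1$. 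The converse direction is Remark~\ref{maponX_2} followed by extension to Postnikov sections.

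\textbf{Main obstacle.} The bulk of the work is faithfulness: extracting from a cellular homotopy of extended maps the $\partial$--equivalence datum $\alpha\colon X'\to Y''\vee Y$ and verifying the coaction identity $f_1=f+(\partial_Y,1)\alpha$. This requires identifying, geometrically, the coaction $X_2\to X_2\vee\Sigma X''$ induced by the pinching map of the cones on cells of $X''$ with the algebraic coaction coming from the suspension component $X'=\Sigma X''$ of the presentation, and then reading off the correction term on each $(n+1)$--cell of the cylinder $IX$. Once this identification is in place, the remaining steps reduce to cellular approximation and standard obstruction theory for relative CW--complexes under $D$.
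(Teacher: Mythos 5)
Your proof is correct and follows the route the paper intends: the paper's entire proof is the single instruction to use Remark~\ref{maponX_2}, and your construction of the functor via mapping cones and Postnikov sections, with Remark~\ref{maponX_2} supplying well--definedness on $\partial$--equivalence classes and obstruction theory (vanishing of $\pi_i$ of the target $n$--type for $i>n$) handling existence and uniqueness of the extensions, is exactly the elaboration of that instruction. The point you single out as the main obstacle --- recovering the datum $\alpha\colon X'\to Y''\vee Y$ with $f_1=f+(\partial_Y,1)\alpha$ from a cellular homotopy of the extended maps --- is indeed where the content lies, and your treatment of it is sound.
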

\begin{proof}
Use Remark \ref{maponX_2}.
\end{proof}

For Examples \ref{egast} and \ref{egdiscrete}, where $n = 1$, it is well-known that groups and groupoids correspond to $1$--types, respectively. In general, we obtain

\begin{prop}\label{type1eq}
Let $\D$ be a groupoid representing the $1$--type of $D$. Then there is an equivalence of categories
\[ \type{1} \stackrel{\sim}{\longrightarrow} \grd{\D}.\]
If $D$ is connected and pointed, let $G = \pi_1(D)$ be the fundamental group. Then there is a corresponding equivalence of categories
\[ \type{1} \stackrel{\sim}{\longrightarrow} \gr{G}.\]
\end{prop}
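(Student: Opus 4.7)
The plan is to combine Lemma~\ref{coefeq} with the equivalence $\tau\colon \CW{1}{D} \sim \grd{\D}_c$ established earlier in the paper. By Lemma~\ref{coefeq}, $\type{1}$ is equivalent to the full subcategory of $\coefD$ whose objects are presentations $\partial_X$ in $\CW{1}{D}$, so it suffices to identify this subcategory with $\grd{\D}$. The cases $D = \ast$ and $D$ discrete are exactly Examples~\ref{egast} and~\ref{egdiscrete}, and the general case will follow along the same lines using the van~Kampen theorem for groupoids \cite{BHS}.

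First I would build the functor $F\colon \type{1} \to \grd{\D}$. Applying $\tau$ to a presentation $\partial_X\colon X'' \to X$ in $\CW{1}{D}$ yields a morphism $\tau(\partial_X)$ between free (cofibrant) groupoids under $\D$; set $F(\partial_X) = \tau(X)/N$, where $N$ is the normal closure of the image of $\tau(\partial_X)$. By van~Kampen, $F(\partial_X)$ is canonically isomorphic, as a groupoid under $\D$, to $\pi_1(X_2)$, where $X_2$ is the mapping cone of $\partial_X$. Essential surjectivity is then routine: every groupoid $H$ under $\D$ admits a presentation by generators and relations over $\D$, which assembles to a morphism in $\grd{\D}_c$ with quotient $H$, and $\tau^{-1}$ realizes this as a presentation in $\CW{1}{D}$.

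For fullness, any groupoid morphism $\phi\colon F(\partial_X) \to F(\partial_Y)$ under $\D$ lifts, by freeness of $\tau(X)$ and $\tau(X'')$ under $\D$, to a morphism in $\grd{\D}_c$ between the cofibrant presentations, which via $\tau^{-1}$ gives a $\partial$--compatible $f$ representing $\phi$. The main obstacle is faithfulness: one must show that two $\partial$--compatible maps $f, f_1\colon X \to Y$ induce the same map on quotient groupoids if and only if they are $\partial$--equivalent. Unwinding the definition $f_1 = f + (\partial_Y, 1)\alpha$, one has to identify the topological correction coming from the coaction $\mu_{C_w}$ of the $D$--cone with multiplication in $F(\partial_Y)$ by an element lying in the normal closure of $\tau(\partial_Y)(\tau(Y''))$. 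This is the verification merely asserted in Examples~\ref{egast} and~\ref{egdiscrete}, and the general case requires assembling the coaction formulas, the twisted extension of Remark~\ref{maponX_2}, and the groupoid van~Kampen theorem. The pointed case is identical, with $\gr{G}$ replacing $\grd{\D}$ and the classical group-theoretic van~Kampen theorem in place of its groupoid refinement.
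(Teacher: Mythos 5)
Your proposal is correct and follows essentially the same route as the paper, which derives Proposition \ref{type1eq} from Lemma \ref{coefeq} together with the algebraic description of $\CW{1}{D}$ by cofibrant objects in $\grd{\G}$; you simply spell out the quotient construction, essential surjectivity, fullness, and the faithfulness verification that the paper (in Example \ref{egast}) leaves to the reader.
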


For $n = 2$ we obtain an algebraic description of $\type{2}$ in terms of crossed modules over groupoids. 

\begin{prop}\label{type2eq}
Let $\mathcal D$ be a crossed module representing the $2$--type of $D$ with $\D_1$ a free groupoid. Then there is an equivalence of categories
\[ \type{2} \stackrel{\sim}{\longrightarrow} \cross{2}{\D}.\]
If $D$ is connected and pointed, there is a corresponding equivalence of categories
\[ \type{2} \stackrel{\sim}{\longrightarrow} \cross{2}{\D,\ast}.\]
\end{prop}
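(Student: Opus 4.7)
My plan is to chain together the two equivalences already at hand, namely Lemma~\ref{coefeq} and Proposition~\ref{CW2dcrossceq}, and then recognise the ``quotient by normally generated image'' construction as the passage from a cofibrant presentation to the associated crossed module. By Lemma~\ref{coefeq} with $n=2$, $\type{2}$ is equivalent to the full subcategory $\mathbf{P}\subset \coefD$ whose objects are presentations $\partial_X\colon X''\to X$ in $\coneD$ with $X\in\CW{2}{D}$; here, as in the proof of Lemma~\ref{CWnDisfull}, one may take $X''$ to be the suspension $\Sigma^2 A_0$ of a discrete set $A_0$ (so that its cone contributes the $3$--cells of the mapping cone $X_2$). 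Modeling $D$ by a CW--complex with $\D = (\pi_2(D,D^1)\to \pi_1(D^1))$, Proposition~\ref{CW2dcrossceq} sends $X$ (and $X''$) to the cofibrant crossed module $\tau(X) = (\pi_2(X,D^1) \to \pi_1(D^1))$ under $\D$, and $\partial_X$ induces a morphism $\tau(\partial_X)\colon \tau(X'')\to \tau(X)$ in $\cross{2,c}{\D}$.

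I would then define the candidate equivalence $\Phi\colon \mathbf{P}\to\cross{2}{\D}$ by sending $\partial_X$ to the quotient crossed module $\tau(X)/N$, where $N\trianglelefteq \tau(X)_2$ is the normal subcrossed module generated by the image of $\tau(\partial_X)_2$. The geometric content of this choice is J.H.C.~Whitehead's calculation: $\tau(X)/N$ is precisely $\pi_2(X_2,D^1)\to \pi_1(D^1)$, the crossed module of the $2$--type $X_2$ obtained as the mapping cone of $\partial_X$. On morphisms, a $\partial$--compatible $f\colon X\to Y$ gives, via $\tau$, a morphism $\tau(f)\colon \tau(X)\to \tau(Y)$ that carries $\im\tau(\partial_X)$ into $\im\tau(\partial_Y)$, and so descends to the quotient.

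To finish, I would verify that $\Phi$ is essentially surjective, full, and faithful. For essential surjectivity, given $M\in\cross{2}{\D}$, present $M_2$ as a quotient of a free crossed $\D_1$--module $F_2$; realise $F_2\to\D_1$ as $\tau(X)$ for some $X\in\CW{2}{D}$ by Proposition~\ref{CW2dcrossceq}, and realise a set of generators of the kernel $F_2\twoheadrightarrow M_2$ by a map $\tau(\partial_X)\colon \tau(\Sigma^2 A_0)\to \tau(X)$, which in turn is realised by a presentation $\partial_X$ in $\coneD$. For fullness, a morphism $\tau(X)/N_X\to \tau(Y)/N_Y$ lifts through $\tau(X)$ by cofibrancy in $\cross{2,c}{\D}$ and is then automatically $\partial$--compatible. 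For faithfulness, two such lifts differ by a map into $\im\tau(\partial_Y)$, which is exactly the datum of a $\partial$--equivalence via Remark~\ref{maponX_2}. The pointed connected case is obtained by replacing groupoids by groups throughout and invoking the ``$D,\ast$'' parts of Lemma~\ref{coefeq} and Proposition~\ref{CW2dcrossceq}.

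The main obstacle is the last identification: matching the homotopy--theoretic notion of $\partial$--equivalence in $\coefD$ with the algebraic condition of inducing the same map on $\tau(X)/N_X\to\tau(Y)/N_Y$. The bridge is Remark~\ref{maponX_2}, which says that $\partial$--equivalent maps $f\sim f_1$ extend to homotopic twisted maps $\overline f\simeq \overline{f_1}\colon X_2\to Y_2$ relative $D$; passing to $\pi_2(-,D^1)$ translates this homotopy into equality modulo $N_Y$, and conversely, any algebraic adjustment of a lift by an element of $N_Y$ can be realised by a null--homotopy along the attached $3$--cells of $Y_2$, giving the required $\alpha\colon X'\to Y''\vee Y$. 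Once this dictionary is pinned down the equivalence of categories is immediate.
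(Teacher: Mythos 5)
Your proposal follows exactly the route the paper takes: it derives Proposition~\ref{type2eq} from Lemma~\ref{coefeq} combined with the algebraic description of $\CW{2}{D}$ in Proposition~\ref{CW2dcrossceq}, identifying a presentation with the quotient crossed module it normally generates, just as in the group case of Example~\ref{egast}. The paper leaves all of these verifications implicit, so your write-up is simply a correct and more detailed elaboration of the same argument.
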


By Theorem \ref{coefeq}, the result above computes certain subcategories of the category $\coefD$, thus generalizing Examples \ref{egast} and \ref{egdiscrete}. Further, we obtain

\begin{prop}\label{type3eq}
Let $\D$ be a quadratic module representing the $3$--type of $D$ with $\D$  free in degrees $1$ and $2$. Then there is an equivalence of categories
\[\type{3} \stackrel{\sim}{\longrightarrow} \qud{3}{\D},\]
where the right hand side is the category of quadratic modules, $q: \D \rightarrow Q$, under $\D$, such that $q_1$ and $q_2$ are the identity morphisms.
\end{prop}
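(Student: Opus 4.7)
The plan is to combine Lemma \ref{coefeq} with Whitehead's realisation theorem for quadratic modules (Theorem IV.8.1 of \cite{4dim}), in the same spirit as Propositions \ref{type1eq} and \ref{type2eq}. We work tacitly in the pointed connected setting, since quadratic modules are defined over groups only.

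By Lemma \ref{coefeq}, $\type{3}$ is equivalent to the full subcategory of $\coefD$ whose objects are presentations $\partial_X : X'' \to X$ with $X, X'' \in \CW{3}{D,\ast}$. For a representative of such a $\partial_X$, the mapping cone $X_2$ is a relative CW--complex under $D$ with cells only in dimensions $3$ and $4$, so $X_2^{(i)} = D^{(i)}$ for $i \leq 2$. I would define $\tau : \type{3} \to \qud{3}{\D}$ by assigning to $X_2$ the Whitehead quadratic module of \cite{4dim}, IV.\S 2, formed from the skeletal filtration $X_2^{(1)} \subset X_2^{(2)} \subset X_2^{(3)}$ with its canonical boundary and quadratic maps. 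The cell-condition forces the natural comparison $\D \to Q(X_2)$ to be the identity in degrees $1$ and $2$, so $Q(X_2)$ lies in $\qud{3}{\D}$. Functoriality on $\partial$--compatible maps, and invariance under $\partial$--equivalence, would follow from Remark \ref{maponX_2}.

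For the quasi-inverse, given $q : \D \to Q$ in $\qud{3}{\D}$, I would choose a free cofibrant resolution $\tilde Q \in \qud{c}{\D}$ of $q$, which amounts to resolving only $Q_3$ freely over the $\D$--data, since $Q_1 = \D_1$ and $Q_2 = \D_2$ are already identified with the first two degrees of $\D$. Whitehead's realisation (Theorem IV.8.1 of \cite{4dim}), in a dimension-shifted variant of Proposition \ref{CW13}, then produces a relative CW--complex $\tilde X$ under $D$ with cells only in dimensions $3$ and $4$ whose associated Whitehead quadratic module is $\tilde Q$; taking the $3$--Postnikov section of $\tilde X$ yields the desired object of $\type{3}$. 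The main obstacle will be independence from the choice of resolution: any two free cofibrant resolutions of $q$ are homotopic in $\qud{c}{\D}$ by Theorem IV.8.1 of \cite{4dim}, hence their realisations are homotopy equivalent as relative CW--complexes, and this equivalence descends to an isomorphism in $\type{3}$ after Postnikov truncation above dimension $3$. Verification that the two functors compose to the identity then reduces to identifying $q_3$ on the Whitehead quadratic module of $\tilde X$ with the original $q_3$, which is immediate from the construction.
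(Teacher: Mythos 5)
Your overall strategy --- reduce via Lemma \ref{coefeq} to presentations $\partial_X$ in $\CW{3}{D,\ast}$ and then identify these algebraically by the quadratic--module machinery of \cite{4dim} --- is exactly the route the paper intends (its proof is the one--line remark that Propositions \ref{type1eq}, \ref{type2eq} and \ref{type3eq} follow from Lemma \ref{coefeq} together with the algebraic models of the earlier sections). The gap is in how you extract the object of $\qud{3}{\D}$. You define $\tau$ by taking the quadratic module of the filtration $X_2^{(1)}\subset X_2^{(2)}\subset X_2^{(3)}$; this construction sees only the $3$--skeleton $X_2^{(3)}=D\cup(\text{$3$--cells})$, and the $4$--cells of $X_2$ --- that is, the presentation map $\partial_X$ itself --- never enter. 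Since $\pi_3$ of the $3$--type is $\ker\partial_3/\im\partial_4$, two presentations with the same source and target but different $\partial_X$ give non--isomorphic objects of $\type{3}$ that your $\tau$ cannot distinguish; already for $D=\ast$ (where objects of $\type{3}$ are spaces $K(\pi,3)$) your functor returns the free datum on a set of generators of $\pi$ rather than $\pi$ itself. The correct degree--$3$ term is the quadratic degree--$3$ group of the $3$--skeleton modulo the relations carried by the $4$--cells --- the quadratic module is \emph{presented} by $\partial_X$ exactly as the group is presented in Example \ref{egast}, which is the whole point of passing through $\coefD$ rather than $\coneD$. (Equivalently, use $\pi_3(X_2,X_2^{(2)})$ with the total space $X_2$, not $\pi_3(X_2^{(3)},X_2^{(2)})$.)

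The same gap resurfaces in your quasi--inverse. A cofibrant resolution of $q:\D\to Q$ inside $\qud{c}{\D}$ that ``resolves only $Q_3$ freely'' cannot be a weak equivalence: replacing $Q_3$ by a free extension $\tilde Q_3\twoheadrightarrow Q_3$ enlarges $\ker\partial_3$, hence changes $\pi_3$, and a quadratic module has no degree $4$ in which to record the kernel; likewise Proposition \ref{CW13} realizes cofibrant quadratic modules by cells in dimensions $2$ and $3$, so there is no ``dimension--shifted variant'' producing cells in dimensions $3$ and $4$ from a three--term object. What you must do instead is choose a presentation of $Q_3$ by generators and relations over $\D_3$, realize the generators as $3$--cells and the relations as $4$--cells --- i.e.\ realize a presentation in $\twistD$, or equivalently a $4$--dimensional quadratic complex as in $\qud{4,c}{\D}$ and Theorem \ref{CW14} --- and only then pass to the $3$--Postnikov section. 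With that correction the remaining points of your outline (functoriality and invariance under $\partial$--equivalence via Remark \ref{maponX_2}, independence of the chosen presentation) go through.
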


Propositions \ref{type1eq}, \ref{type2eq} and \ref{type3eq} follow from Lemma \ref{coefeq} by considering the algebraic categories in Sections \ref{underD} and \ref{ncells}.

\section{Modules and Chain Complexes}
In this section we use homotopy groups and partial suspensions in the category $\Top$ to introduce the category of modules associated with presentations in $\coneD$. We recall the notation from Baues \cite{AH}.

For a based object $A$ and an object $U$ in $\Top$, we consider the \emph{homotopy groups} 
\[\pi_n^A(U) = [\Sigma^n A,U]^D, n \geq 0,\]
of homotopy classes relative $D$ in $\Top$. The suspension, $\Sigma^n(CA, A)$, of $(CA, A)$ is the pair $(\Sigma^n CA, \Sigma^n A)$ and the \emph{relative homotopy groups} of a pair $(U,V)$ in $\Top$ are given by the set of homotopy classes of pair maps,
\[\pi_{n+1}^A(U,V) = [\Sigma^n (CA,A),(U,V)]^D, n \geq 0.\]
For $n\geq 0$, we obtain the exact sequence
\begin{equation}\label{pilongex}
\ldots \longrightarrow \pi_{n}^A(U) \stackrel{j}{\longrightarrow} \pi_{n}^A(U,V) \stackrel{\partial}{\longrightarrow} \pi_{n-1}^A(V) \stackrel{i}{\longrightarrow} \pi_{n-1}^A(U) \longrightarrow \ldots
\end{equation}
which is an exact sequence of groups if $A$ is a cogroup in $\quo{$\Top$}{$\simeq$}$, see II.7.8. \cite{AH}. 

Given a based object $B$ and an object $Y$ in $\Top$, the retraction $(0,1): B \vee Y \rightarrow Y$ induces \[(0,1)_{\ast}: \pi_n^A(B \vee Y) \rightarrow \pi_n^A(Y).\] We put
\[\pi_n^A(B \vee Y)_2 = [\Sigma^nA, B \vee Y]^D_2 = \ker \big( (0,1)_{\ast}\big).\]
If $A$ is a suspension in $\coneD$, the operators $j$ and $\partial$ in the exact sequence (\ref{pilongex}) yield the isomorphisms $j$ and $\partial$ in the diagram
\[\xymatrix{
& \pi_n^A(CB \vee Y, B \vee Y)\ar[r]^-{\partial}_-{\cong} \ar[d]^{(\pi_0\vee 1_Y)_{\ast}} & \pi_{n-1}^A(B \vee Y)_2\\
\pi_{n}^A(\Sigma B \vee Y)_2 \ar[r]^-{j}_-{\cong} & \pi_n^A(\Sigma B \vee Y, Y),}\]
where $\pi_0: CA \rightarrow \Sigma A$ is the map in the pushout diagram defining $\Sigma A$. We define the \emph{partial suspension} 
\[E: \pi_{n-1}^A(B \vee Y)_2 \longrightarrow \pi_{n}^A(\Sigma B \vee Y)_2\]
by $E = j^{-1} (\pi_0 \vee 1_Y)_{\ast} \partial^{-1}$. For $Y = \ast$, this is the suspension
\[\Sigma: \pi_{n-1}^A(B) \longrightarrow \pi_n^A(\Sigma B).\]
For a proof that $E$ and $\Sigma$ are group homomorphisms, we refer the reader to II.11 in \cite{AH}.

\begin{dfn}
The objects of the \emph{category of modules}, $\modD$,  associated with $\coefD$ are coproducts $A \vee \partial_X$, where $A$ is a suspension, considered as an object of $\coefD$ via the inclusion $\coneD \subset \coefD$, and $\partial_X$ is a presentation. Given a morphism, $u: \partial_X \rightarrow \partial_Y$, in $\coefD$, a \emph{$u$--equivariant map} is a pair $(w, u)$, written as
\[w \odot u: A \vee \partial_X\longrightarrow B \vee \partial _Y, \]
where $w$ is an element in the image of the composition
\[ [A, B \vee Y]^D_2 \stackrel{E}{\longrightarrow} [ \Sigma A, \Sigma B \vee Y ]^D_2  \stackrel{i_{\ast}}{\longrightarrow} [ \Sigma A, \Sigma B \vee Y_2]^D_2.
\]
Here $Y_2$ is the mapping cone of a representative $\partial_Y': Y'' \rightarrow Y$ of the homotopy class $\partial_Y$ and $i_\ast$ is induced by the inclusion $Y \subset Y_2$. Morphisms in $\modD$ are $u$--equivariant maps $(w, u) = w \odot u$ and composition is given by
\[(w \odot u)(w' \odot u') = (w, i_2u)w' \odot uu',\]
where $i_2u: X_2 \rightarrow Y_2 \subset \Sigma B \vee Y_2$ is given by a map $X_2 \rightarrow Y_2$ extending a representative $X \rightarrow Y$ of $u$. We say that $w \odot u$ is \emph{$\partial_X$--equivariant} if $u$ is the identity on $\partial_X$.
\end{dfn}

There is a canonical \emph{coefficient functor}
\[c: \modD \longrightarrow \coefD\]
which maps objects $A \vee \partial_X$ to $\partial_X$ and morphisms $w \odot u$ to $u$.

\begin{eg}
Consider the category $\modhat$ of free modules over groups. The objects of $\modhat$ are pairs $(G, M)$, where $G$ is a group and $M$ a free right $G$--module. Morphisms are pairs $(\varphi, f): (G, M) \rightarrow (G', M')$, where $\varphi: G \rightarrow G'$ is a group homomorphism and $f: M \rightarrow M'$ a $\varphi$--equivariant homomorphism. A group, $G$, has a presentation $\partial_G$ in $\coefD$, where $D = \ast$ is a point, as in Example \ref{egast}. For $n \geq 2$, the full subcategory of $\modD$ consisting of objects, $A \vee \partial_X$, where $A$ is a wedge of $n$--spheres, is equivalent to the category $\modhat$. The coefficient functor, $c$, corresponds to the functor $\modhat \rightarrow \mathbf{Gr}$ which maps $(G, M)$ to $G$.
\end{eg}

The subcategory, $\mod{G}$, of $G$--equivariant maps in  $\modhat$ is an additive category. Similarly, the full subcategory $\mod{\partial_X}$ of $\modD$ consisting of $\partial_X$--equivariant maps is an additive category with sum
\[(A, \partial_X) \oplus (B, \partial_X) = A \vee B \vee \partial_X.\] 

\begin{dfn}
The objects of the category $\chainD$ of \emph{chain complexes} in $\modD$ are pairs $(A, \partial_X)$, where $\partial_X$ is an object in $\coefD$ and $A$ is a chain complex in the additive category $\mod{\partial_X}$, that is, a sequence of suspensions, $A_i, i \in \Z$, in $\coneD$ together with $\partial_X$--equivariant maps
\[ d_i: A_i \vee \partial_X \longrightarrow A_{i-1} \vee \partial_X\]
in $\modD$, such that $d_{i-1}d_i = 0$ in the abelian group of $\partial_X$--equivariant maps. A morphism $(f,u) : (A, \partial_X) \rightarrow (B, \partial_Y)$ in $\chainD$ consists of a morphism $u: \partial_X \rightarrow \partial_Y$ in $\coefD$ and a $u$--equvariant chain map $f$, that is, a sequence $f = (f_i)_{i \in \Z}$ such that the diagram
\[\xymatrix{
A_i \vee \partial_X \ar[d]_{f_i} \ar[r]^{d_i} & A_{i-1} \vee \partial_X \ar[d]^{f_{i-1}} \\
B_i \vee \partial_Y \ar[r]^{d_i} & B_{i-1} \vee \partial_Y} \]
commutes. Two morphisms, $(f,u), (g,v) : (A, \partial_X) \rightarrow (B, \partial_Y)$ are \emph{homotopic} in $\chainD$, $(f,u) \simeq (g,v)$, if $u = v$ and if there is a sequence, $\alpha = (\alpha_i)_{i \in \Z}$, of $u$--equivariant maps
\[\alpha_i: A_i \vee \partial_X \longrightarrow B_{i+1} \vee \partial_Y\]
in $\modD$, such that 
\[-f_i + g_i = d_{i+1}\alpha_i + \alpha_{i-1}d_i.\]
\end{dfn}

\section{The Homotopy Category of Twisted Maps}
Given a based object $A$ and a map $w: A \rightarrow D$ under $D$, the inclusions $i_1:  \Sigma A \rightarrow \Sigma A \vee C_w$ and $i_2: C_w \rightarrow \Sigma A \vee C_w$ yield the map
\[ i_2+i_1: C_w \longrightarrow \Sigma A \vee C_w \quad \text{in} \ \coneD.\]
Except for the order of the objects in the wedge, this is the coaction $\mu_{C_w}$. For $f: \Sigma X \rightarrow C_w$, the \emph{difference element}
\[\nabla f: = -f^{\ast}(i_2) + f^{\ast}(i_2 + i_1) : \Sigma X \longrightarrow \Sigma A \vee C_w\]
is trivial on $C_w$, that is, $(0,1)_{\ast} \nabla f = 0$, and thus yields the \emph{difference operator}
\[ \nabla: \pi_1^X(C_w) \longrightarrow \pi_1^X(\Sigma A \vee C_w)_2.\]

\begin{dfnlem}\label{Kdfn}
For an object $\partial_X: X'' \rightarrow X$ in $\twistD$, the map
\[d_X = E(\nabla \partial_X) \odot 1: X'' \vee \partial_X \longrightarrow X' \vee \partial_X\]
is a $\partial_X$--equivariant map in $\modD$ which we view as a chain complex concentrated in degrees $1$ and $2$. To a map $(f'',f): \partial_X \rightarrow \partial_Y$ in $\twistD$ we assign the map $(f_1, f_2) = (E(\nabla f) \odot u, E(f'') \odot u)$, where $u: \partial_X \rightarrow \partial_Y$ in $\coefD$ is represented by $f$. 
\[\xymatrix{
X'' \vee \partial_X \ar[r]^{d_X} \ar[d]_{f_2=E(f'') \odot u} & X' \vee \partial_X \ar[d]^{f_1=E(\nabla f) \odot u} \\
Y'' \vee \partial_Y \ar[r]^{d_Y} & Y' \vee \partial_Y }\]
Then
\begin{eqnarray*}
K: \twistD & \longrightarrow & \chainD \\
\partial_X & \longmapsto & d_X \\
(f'',f): \partial_X \rightarrow \partial_Y & \longmapsto & (f_1, f_2)
\end{eqnarray*}
is a well--defined functor. 

Further, there is a natural equivalence relation, $\simeq_E$, on $\twistD$, namely, $(f'', f) \simeq_E (g'', g): \partial_X \rightarrow \partial_Y$, if there is a map $\alpha: X' \rightarrow Y'' \vee Y$ with $(0_{Y''}, 1_Y) \alpha = 0_{X'}$, such that 
\[g = f + (\partial_Y, 1_Y)\alpha\]
and
\[ -E(f'') \odot u +E(g'') \odot u = E(\alpha) \odot u d_X\]
in $\modD(X'' \vee \partial_X, Y'' \vee \partial_Y)_u$, where $u = \{f\} = \{g\}$.
\end{dfnlem}

\begin{lem}
The functor $K$ in Lemma \ref{Kdfn} induces a well--defined functor 
\[K: \quo{$\twistD$}{$\simeq_E$} \longrightarrow \quo{$\chainD$}{$\simeq$}\]
between homotopy categories which is compatible with coefficient functors.
\end{lem}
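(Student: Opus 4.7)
The plan is to show that $K$ respects both equivalence relations and commutes with the coefficient functors. On objects there is nothing to check, since $K(\partial_X) = d_X$ depends only on the homotopy class $\partial_X$. The substantive content is to produce, from an $\simeq_E$-equivalence $(f'', f) \simeq_E (g'', g) \colon \partial_X \to \partial_Y$, a chain homotopy in $\chainD$ between $K(f'', f)$ and $K(g'', g)$.

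Since $K(\partial_X)$ is concentrated in degrees $1$ and $2$, a chain homotopy reduces to a single $u$-equivariant map $\alpha_1 \colon X' \vee \partial_X \to Y'' \vee \partial_Y$ satisfying $-f_2 + g_2 = \alpha_1 d_X$ and $-f_1 + g_1 = d_Y \alpha_1$. Given the witness $\alpha \colon X' \to Y'' \vee Y$ to $\simeq_E$, I would take $\alpha_1 := E(\alpha) \odot u$. The first equation is then literally the second defining condition of $\simeq_E$ from Definition and Lemma \ref{Kdfn}, so it holds by hypothesis. The second equation amounts to the identity
\[-E(\nabla f) \odot u + E(\nabla g) \odot u \;=\; (E(\nabla \partial_Y) \odot 1)\,(E(\alpha) \odot u),\]
which I would deduce from the first relation $g = f + (\partial_Y, 1_Y)\alpha$ by combining three ingredients: the formula for the difference operator $\nabla$ on a sum, which decomposes $\nabla g$ as $\nabla f$ plus a term coming from $(\partial_Y, 1_Y)\alpha$; the additivity and naturality of the partial suspension $E$ recorded in II.11 of \cite{AH}; and the composition rule $(w \odot u)(w' \odot u') = (w, i_2 u)w' \odot uu'$ in $\modD$, which identifies $(E(\nabla \partial_Y) \odot 1)(E(\alpha) \odot u)$ with the $E$-image of the extra term produced by the first ingredient.

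The main obstacle is the bookkeeping of this last step: the three ingredients must be interlocked so that group operations in $[\Sigma X', \Sigma Y'' \vee Y_2]^D_2$, the underlying coactions, and the base-point conventions all align, yielding an honest equality in $\modD(X' \vee \partial_X, Y'' \vee \partial_Y)_u$. Once this is in hand, compatibility with the coefficient functors is immediate: $K$ sends $(f'', f)$ to a chain map with underlying coefficient $u = \{f\} \in \coefD$, which coincides with the image of $(f'', f)$ under the descent $\twistD \to \coefD$; since $\alpha_1$ is $u$-equivariant, the induced functor $\quo{$\twistD$}{$\simeq_E$} \to \quo{$\chainD$}{$\simeq$}$ fits into the obvious commutative triangle with the two coefficient functors to $\coefD$.
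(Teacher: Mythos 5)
Your argument is structurally correct, and it is worth noting that the paper itself gives no proof of this lemma at all, simply deferring to \cite{CFHH}; so your direct verification is in fact more explicit than the source. You correctly reduce everything to two identities for the candidate homotopy $\alpha_1 = E(\alpha)\odot u$ (which is a legitimate morphism of $\modD$ precisely because $(0_{Y''},1_Y)\alpha = 0_{X'}$): the degree--$2$ identity $-f_2+g_2=\alpha_1 d_X$ is verbatim the second defining condition of $\simeq_E$ in Definition and Lemma \ref{Kdfn}, and the degree--$1$ identity $-E(\nabla f)\odot u + E(\nabla g)\odot u = d_Y\,(E(\alpha)\odot u)$ is obtained by applying $E$ to the distributivity law for the difference operator $\nabla$ evaluated on $g = f+(\partial_Y,1_Y)\alpha$ and then invoking the composition rule in $\modD$. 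That distributivity law, together with the additivity and naturality of the partial suspension, is exactly what is established in II.11--12 and V.3 of \cite{AH} and in \cite{CFHH}, so the ``bookkeeping'' you defer is a quotable computation rather than a genuine gap. The compatibility with coefficient functors is, as you say, immediate from the fact that $K(f'',f)$ has underlying coefficient morphism $u=\{f\}$.
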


The properties of the functor $K$ are described in \cite{CFHH}.
\section{Homotopy Types of Relative CW--Complexes under $D$}
Recall that $\CW{k,n}{D}$ is the full subcategory of \quo{$\Top$}{$\simeq$} consisting of relative CW--complexes $(X, D)$ of dimension less than or equal to $n$ with trivial $k$--skeleton.

\begin{lem}\label{k,k+r}
For $k \geq r - 1 \geq 0$, every relative CW--complex $Y = (Y, D)$ in $\CW{k, k+r}{D}$ is homotopy equivalent under $D$ to a $D$--cone $X = C_w$, where $w: A \rightarrow D$ and the based object $A$ in $\Top$ is given by a relative CW--complex $(A, D)$ in $\CW{k-1, k+r-1}{D}$.
\end{lem}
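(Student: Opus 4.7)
I would prove the lemma by induction on the skeleta of $Y$, constructing pairs $(A^{(n)}, w^{(n)})$ with $A^{(n)} \in \CW{k-1, n-1}{D}$ and $C_{w^{(n)}} \simeq Y^n$ under $D$, for $n$ running from $k+1$ up to $k+r$; then $(A, w) := (A^{(k+r)}, w^{(k+r)})$ gives the desired presentation. The assumption $k \geq r - 1$ will enter as a Freudenthal-type stable-range condition ensuring that each inductive step can be carried out.

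For the base case $n = k+1$: the space $Y^{k+1}$ is $D$ with $(k+1)$-cells attached along some map $\phi \colon \bigvee_\alpha S^k_\alpha \to D$. Take the based object $A^{(k+1)} = D \vee \bigvee_\alpha S^k_\alpha$, with retraction $0_{A^{(k+1)}}$ collapsing each $S^k_\alpha$ to the wedge point in $D$, and define $w^{(k+1)}$ to be the identity on $D$ and $\phi_\alpha$ on $S^k_\alpha$. Unwinding the pushout defining $C_{w^{(k+1)}} = CA^{(k+1)} \cup_{A^{(k+1)}} D$ then identifies it with $Y^{k+1}$ under $D$.

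For the inductive step, assume $Y^n \simeq C_{w^{(n)}}$ under $D$ with $A^{(n)} \in \CW{k-1, n-1}{D}$. Then $Y^{n+1}$ is built from $Y^n$ by attaching $(n+1)$-cells along an attaching map $\phi$. I want to extend $A^{(n)}$ to $A^{(n+1)}$ by attaching $n$-cells along some map $\psi \colon \bigvee S^{n-1} \to A^{(n)}$, and extend $w^{(n)}$ to a map $w^{(n+1)}$ on these new cells, in such a way that the induced $(n+1)$-cells in $C_{w^{(n+1)}}$ have attaching maps homotopic to $\phi$ under $D$. By the cylinder-plus-collapse structure of the cone construction, these induced attaching maps take the form $E(\psi)$ corrected by a contribution coming from the difference between $w^{(n+1)}$ and $0_{A^{(n+1)}}$ on the new cells, where $E$ is the partial suspension of Section 6. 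Realizing $\phi$ in this form is therefore a desuspension problem for $E$ together with the action of $w$.

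This desuspension is the main obstacle, and the bound $k \geq r - 1$ is exactly what guarantees its solvability. Since $A^{(n)}$ has cells only in dimensions $\{k, \ldots, n-1\}$ it is $(k-1)$-connected, and a Freudenthal-type argument applied via the exact sequence (\ref{pilongex}) together with the definition of $E$ shows that $E$, combined with the action of $w$, surjects onto the relevant $\pi_n$ provided $n \leq 2k$. Since $n$ never exceeds $k + r - 1$ in the induction, the condition $k + r - 1 \leq 2k$, equivalently $k \geq r - 1$, is precisely what the hypothesis supplies. The nullhomotopy of $0_{A^{(n)}} \circ \psi$ implicit in the desuspension data then provides an extension of the retraction to $A^{(n+1)}$, making it into a based object, and a routine comparison of the resulting pushouts shows $C_{w^{(n+1)}} \simeq Y^{n+1}$ under $D$, closing the induction.
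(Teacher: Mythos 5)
The paper proves this lemma only by citing 2.3.5 of \cite{OT} (Baues's principal reduction), and your skeleton-by-skeleton induction --- desuspending each attaching map through the partial suspension $E$ in the stable range, with the two cap null-homotopies supplying the extensions of $w$ and of the retraction $0_A$ over the new cells --- is precisely the argument behind that citation. The numerology also checks out: the attaching maps to be realized lie in $\pi_n$ for $n \le k+r-1$, while Blakers--Massey/Freudenthal applied to the $(k-1)$-connected pair $(A^{(n)},D)$ gives the required surjectivity for $n \le 2k$, which is exactly the hypothesis $k \ge r-1$.
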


\begin{proof}
See 2.3.5 in \cite{OT}.
\end{proof}

\begin{lem}\label{k,2r+k}
For $k \geq r - 1 \geq 0$, every relative CW--complex $Y = (Y, D)$ in $\CW{k, 2r+k}{D}$ is homotopy equivalent under $D$ to the mapping cone of a presentation $\partial_X: X'' \rightarrow X$ in $\coneD$, where
\begin{itemize}
\item[(i)] $X$ is a $D$--cone as in Lemma \ref{k,k+r} which is homotopy equivalent under $D$ to the $(k+r)$--skeleton $Y^{k+r}$, and
\item[(ii)] $X''$ is the suspension of a based object $A''$  which is a relative CW--complex in $\CW{k+r-2,2r+k-2}{D}$.
\end{itemize}
\end{lem}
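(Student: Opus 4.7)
The plan is to construct the presentation $\partial_X$ by applying Lemma \ref{k,k+r} in two stages: first to the $(k+r)$--skeleton of $Y$ to produce $X$ satisfying (i), and then to the pair $(Y, X)$ to produce an attaching complex whose suspension structure delivers (ii).

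To produce $X$, I observe that $(Y^{k+r}, D) \in \CW{k, k+r}{D}$ and the hypothesis $k \geq r - 1 \geq 0$ is exactly the hypothesis of Lemma \ref{k,k+r}. That lemma gives a $D$--cone $X = C_w$ with $w : A \rightarrow D$ and $A \in \CW{k-1, k+r-1}{D}$, together with a homotopy equivalence $X \simeq Y^{k+r}$ under $D$. Using the homotopy extension property in the cofibration category $\Top$, I would replace $Y^{k+r}$ by $X$ inside $Y$, so that without loss of generality $Y^{k+r} = X$; this proves (i).

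The pair $(Y, X)$ now lies in $\CW{k+r, 2r+k}{X}$. With parameters $\tilde k = k+r$ and $\tilde r = r$, the hypothesis $\tilde k \geq \tilde r - 1$ of Lemma \ref{k,k+r} reads $k \geq -1$ and is automatic. A second application of Lemma \ref{k,k+r} therefore presents $Y$ as an $X$--cone $Y \simeq C_{w'}$ under $X$, with $w' : \tilde A \rightarrow X$ and $(\tilde A, X) \in \CW{k+r-1, 2r+k-1}{X}$.

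For (ii) it remains to recognise $\tilde A$, viewed as a based object in $\Top$ via the retraction $\tilde A \to X \to D$, as the suspension $\Sigma A''$ of some $A'' \in \CW{k+r-2, 2r+k-2}{D}$. I would apply Lemma \ref{k,k+r} a third time, now to $(\tilde A, X)$, with parameters $(k+r-1, r)$ satisfying $k + r - 1 \geq r - 1$, to present $\tilde A$ as an $X$--cone whose attaching complex has cells in dimensions $[k+r-1,\, 2r+k-2]$. A relative Freudenthal--type argument, exploiting the $k$-connectivity of $(X, D)$ (the pair is obtained by attaching cells of dimension $\geq k+1$) together with the dimension bound on these cells, then desuspends the attaching complex to a based object $A''$ over $D$ in $\CW{k+r-2, 2r+k-2}{D}$, and identifies $\tilde A$ under $D$ with $\Sigma A''$. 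Composing with $w'$ produces the required presentation $\partial_X : X'' = \Sigma A'' \rightarrow X$ in $\coneD$ whose mapping cone is equivalent under $D$ to $Y$.

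The main obstacle is the final step, namely the identification of the attaching complex of $\tilde A$ over $X$ with the suspension of a CW--complex over $D$. The precise bound $k \geq r - 1$ is chosen so that the cells of the attaching complex fall in the stable range relative to the $k$-connectivity of $(X, D)$, and it is this numerical coincidence, together with the explicit cofibre structure $X = D \cup_A CA$, that powers the desuspension.
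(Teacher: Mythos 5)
The paper states this lemma without proof (its neighbours are delegated to 2.3.5 of \cite{OT} and VII.3.1 of \cite{AH}, i.e.\ to the principal reduction theorem), so I can only judge your outline on its own terms. The two--stage skeleton --- present $Y^{k+r}$ as a $D$--cone $X$ via Lemma \ref{k,k+r}, then present $Y$ relative to $X$ and identify the attaching object --- is the standard principal--reduction strategy and is surely what is intended. But the outline has one outright error and one genuine gap, and the gap is where all the content of the lemma lives. The error: you propose to base $\tilde A$ over $D$ ``via the retraction $\tilde A \to X \to D$''. There is no retraction $X \to D$: a $D$--cone $X = C_w$ retracts to $D$ only when $w$ is trivial, i.e.\ when $X$ is a suspension. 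So the based object $\tilde A$ produced by Lemma \ref{k,k+r} over the base $X$ cannot simply be re-read as a based object over $D$. What is actually required is a splitting $\tilde A \simeq X'' \vee X$ under $D$, with the retraction $\tilde A \to X$ becoming $(0_{X''},1_X)$; only then does the $X$--cone $C_{w'}$ become the mapping cone of a map $X'' \to X$ with $X''$ based over $D$, as the definition of a presentation in $\coneD$ demands. This splitting is not formal and is not delivered by any of your three applications of Lemma \ref{k,k+r}.

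The gap: both the splitting above and the suspension structure on $X''$ are compressed into ``a relative Freudenthal--type argument'', and the connectivity you invoke for it is the wrong one. The $k$--connectivity of $(X,D)$ is what lets you compress attaching maps into $X$ and split off $X$ from $\tilde A$; it does not desuspend $X''$. What makes $X''$ a suspension over $D$ is the comparison of its own connectivity (cells in dimensions $\geq k+r$) with its dimension ($\leq 2r+k-1$), via the surjectivity of the (partial) suspension $E: [A'', B \vee Y]^D_2 \to [\Sigma A'', \Sigma B \vee Y]^D_2$ in the range governed by $k \geq r-1$ (II.11 and V of \cite{AH}); and it is this relative statement, not the absolute Freudenthal theorem, that must be applied inductively over the cells of $X''$. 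Your third application of Lemma \ref{k,k+r} only exhibits $\tilde A$ as yet another $X$--cone and so merely relocates the problem one dimension down; it does not produce the trivial attaching map that characterises a suspension. Until the splitting and the desuspension are carried out with these suspension--theoretic inputs made explicit, the proof is an outline of the statement rather than an argument for it.
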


The presentations in Lemma \ref{k,2r+k} form the subcategory $\twist{k,r}{D}$ of $\twistD$ consisting of presentations $\partial_X: X'' \rightarrow X$, where $X$ is a $D$--cone in $\CW{k,k+r}{D}$ and $X''$ is a suspension in $\CW{k+r-1,2r+k-1}{D}$.

To discuss questions of realizability for a functor $\lambda: \A \rightarrow \B$, we consider pairs $(A, b)$, where $b: \lambda A \cong B$ is an equivalence in $\B$. Two such pairs are equivalent, $(A, b) \sim (A', b')$, if there is an equivalence $g: A \rightarrow A'$ in $\A$ with $\lambda g = b^{-1}b'$. The classes of this equivalence relation form the class of $\lambda$--realizations of $B$,
\[\text{Real}_{\lambda}(B) = \quo{$\{ (A, b) \, | \, b: \lambda A \equiv B \}$}{$\sim$}.\]
We say that $B$ is \emph{$\lambda$--realizable}, if $\text{Real}_{\lambda}(B)$ is non--empty. The functor $\lambda: A \rightarrow B$ is {\emph{representative} if all objects $B$ in $\B$ are $\lambda$--realizable. Further, we say that $\lambda$ \emph{reflects isomorphisms}, if a morphism $f$ in $\A$ is an equivalence whenever $\lambda(f)$ is an equivalence in $\B$. A morphism $\overline f: \lambda(A) \rightarrow \lambda(A')$ in $\B$ is \emph{$\lambda$--realizable}, if there is a morphism $f: A \rightarrow A'$ in $\A$ with $\lambda(f) = \overline f$. The functor $\lambda$ is \emph{full} if every morphism $f: A \rightarrow A'$ in $\A$ is $\lambda$--realizable. A functor which is full and representative and reflects isomorphisms is called a \emph{detecting functor}. 

\begin{thm}\label{presexist}
There is a functor
\[\tau: \CW{k,2r+k}{D} \longrightarrow \quo{$\twist{k,r}{D}$}{$\simeq_E$,}\]
which takes a relative CW--complex $(X, D)$ to the presentation in Lemma \ref{k,2r+k}. Morevover, $\tau$ is a detecting functor. 
\end{thm}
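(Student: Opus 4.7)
The plan is to build the functor in two stages (objects, then morphisms), verify well-definedness up to $\simeq_E$, and finally check the three conditions (full, representative, reflects isomorphisms) defining a detecting functor.

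First I would define $\tau$ on objects: given $(X,D)$ in $\CW{k,2r+k}{D}$, apply Lemma \ref{k,2r+k} to choose a presentation $\partial_X: X'' \to X$ with $X \simeq X^{k+r}$ a $D$-cone in $\CW{k,k+r}{D}$ and $X'' \in \CW{k+r-1,2r+k-1}{D}$ the suspension of a based relative CW-complex on the attaching cells of the top dimensions. Any two such choices, coming from different CW-approximations of $(X,D)$ or different realizations of the attaching maps, differ by a homotopy equivalence of the $(k+r)$-skeleton together with a homotopy of the attaching map of the top cells; this exactly produces an $\simeq_E$-equivalence between the two presentations, by the definition of $\simeq_E$ in Definition \ref{Kdfn} (the homotopy of the attaching map supplies the map $\alpha: X' \to Y'' \vee Y$).

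Next I would define $\tau$ on morphisms. Given $f: (X,D) \to (Y,D)$ in $\CW{k,2r+k}{D}$, apply cellular approximation under $D$ to replace $f$ by a cellular map, still denoted $f$. Its restriction to the $(k+r)$-skeleton gives a map $X^{k+r} \to Y^{k+r}$ which, via the identifications $X^{k+r} \simeq X$ and $Y^{k+r} \simeq Y$ of the $D$-cones, yields a map $\bar f: X \to Y$ in $\coneD$. To build the twisted component $f'': X'' \to Y'' \vee Y$, observe that $X''$ is built from the attaching maps of the top cells of $(X,D)$; the cellular map $f$ carries each top cell of $X$ into $Y^{k+r}$ up to the attaching data of $Y$'s top cells, and after partial suspension this yields $f''$ with $(0_{Y''},1)f'' = 0_{X''}$ and the twisted square commuting up to homotopy. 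Thus $(f'',\bar f)$ is a morphism in $\twist{k,r}{D}$. Independence of the cellular approximation and of the choice of identifications is again captured by $\simeq_E$, so $\tau(f) := [(f'',\bar f)]$ is well defined. Functoriality (preservation of composition and identities) follows by a further cellular approximation argument applied to composites and the coaction formula used to compose morphisms in $\twistD$.

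For the detecting properties: \emph{representativity} is immediate, since any $\partial_X$ in $\twist{k,r}{D}$ has a mapping cone $X_2$ lying in $\CW{k,2r+k}{D}$ with $\tau(X_2) \simeq_E \partial_X$. \emph{Fullness} is handled by Remark \ref{maponX_2}: given a morphism $(f'',\bar f): \partial_X \to \partial_Y$ in $\twistD$, the construction of the twisted map $\bar{\bar f}: X_2 \to Y_2$ produces an honest map in $\CW{k,2r+k}{D}$ whose $\tau$-image recovers $(f'',\bar f)$ up to $\simeq_E$, and $\simeq_E$-equivalent morphisms yield homotopic maps on mapping cones. \emph{Reflection of isomorphisms} follows from the Whitehead theorem in $\Top$: if $\tau(f)$ is invertible in $\quo{\twist{k,r}{D}}{\simeq_E}$, then both $\bar f: X^{k+r} \to Y^{k+r}$ and the induced map on the top cells are homotopy equivalences under $D$; combined via the mapping-cone gluing, $f$ induces isomorphisms on all relative homotopy groups $\pi_i^A(-)$ with $A$ a sphere, whence $f$ is a homotopy equivalence under $D$.

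The main obstacle I expect is the bookkeeping required to show that $\tau$ is well defined on morphisms modulo $\simeq_E$, and that the fullness statement is genuinely surjective onto $\simeq_E$-classes rather than onto strict morphisms. The delicate step is matching the two descriptions of the equivalence relation: the geometric ambiguity in choosing a cellular approximation and a CW-decomposition realizing Lemma \ref{k,2r+k}, versus the algebraic formula in Definition \ref{Kdfn} involving $\alpha$ and the partial suspension $E(\alpha)$. Verifying that these coincide amounts to checking that a homotopy of the attaching map, after partial suspension, produces precisely the correction term $(\partial_Y,1)\alpha$ in the coaction, which uses the fact that $X''$ is already a suspension and hence that the difference operator $\nabla$ controls all the twisting.
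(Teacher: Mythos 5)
The paper itself does not prove this theorem directly: it derives it from VII 3.1 of \cite{AH}, where the statement is established once and for all for a cofibration category equipped with a theory of coactions, of which $\coneD$ is an instance. Your outline is essentially a reconstruction of that argument specialized to relative CW--complexes, and its main ingredients are the right ones: Lemma \ref{k,2r+k} on objects, cellular approximation and the coaction on morphisms, Remark \ref{maponX_2} for fullness, and mapping cones for representativity. The two places where you yourself locate the real work --- independence of the choices in Lemma \ref{k,2r+k} up to $\simeq_E$, and the identification of the geometric ambiguity of cellular approximation with the algebraic relation $\simeq_E$ of Definition \ref{Kdfn} --- are asserted rather than proved; that is precisely where the content of \cite{AH} VII 3.1 lies, and as an outline this is acceptable.

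There is, however, one step that is wrong as stated: your argument that $\tau$ reflects isomorphisms. Invertibility of $\tau(f)$ in $\twist{k,r}{D}/{\simeq_E}$ does \emph{not} imply that the restriction $\bar f\colon X=X^{k+r}\to Y=Y^{k+r}$ is a homotopy equivalence under $D$. Unwinding the definitions, it only produces $h\colon Y\to X$ with $h\bar f=1_X+(\partial_X,1)\alpha$, i.e.\ $h\bar f$ is $\partial$--equivalent to the identity; so $\bar f$ becomes invertible in $\coefD$ but need not be invertible in $\coneD$. Example \ref{egast} already exhibits the difference: for $D=\ast$, $k=0$, $r=1$ and the presentation $x\mapsto x^2$ of $\Z/2$ (so $X_2=Y_2=\R P^2$), the self--map $x\mapsto x^3$ of the $1$--skeleton $S^1$ is $\partial$--equivalent to the identity, hence lies over an invertible morphism, yet has degree $3$ and is not a homotopy equivalence of the skeleton. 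Consequently your ``combined via the mapping--cone gluing'' step starts from a false premise and the five--lemma argument on skeleta does not go through. The correct argument uses the linear--extension structure recorded in Addendum \ref{orbits} and the remark following it: $\tau$ is full and its fibres on hom--sets are orbits of the action of $[\Sigma X'',Y]^D$, so one lifts an inverse of $\tau(f)$ by fullness to get $gf=1+i_\ast\alpha$, and the linearity of the action gives $(1+i_\ast\alpha)(1-i_\ast\alpha)=1$, whence $gf$ and, symmetrically, $fg$ are isomorphisms. You should replace the Whitehead--theorem paragraph by this argument (or prove directly that every map of the form $1_{X_2}+i_\ast\alpha$ is a homotopy equivalence under $D$).
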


Theorem \ref{presexist} follows from VII 3.1 in \cite{AH}.

In general, the functor $\tau$ in Theorem \ref{presexist} is a linear extension of categories, not necessarily an equivalence.

\begin{cor}\label{presexistcor}
There is a 1--1 correspondence between homotopy types in $\CW{k,2r+k}{D}$ and isomorphism types of presentations in $\quo{$\twist{k,r}{D}$}{$\simeq_E$}$.
\end{cor}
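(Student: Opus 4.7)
The plan is to read the corollary as the statement that the functor $\tau$ of Theorem \ref{presexist} induces a bijection on isomorphism classes, and to deduce this from the fact that $\tau$ is a detecting functor. Since $\CW{k,2r+k}{D}$ is by construction a full subcategory of the homotopy category $\quo{$\Top$}{$\simeq$}$, ``homotopy types'' in $\CW{k,2r+k}{D}$ are exactly isomorphism classes of its objects; similarly, ``isomorphism types of presentations'' in $\quo{$\twist{k,r}{D}$}{$\simeq_E$}$ are isomorphism classes in that quotient category. So the task reduces to a general categorical observation: a functor that is full, representative, and reflects isomorphisms induces a bijection between isomorphism classes.

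For surjectivity of the induced map on isomorphism classes, given a presentation $\partial_Y$ in $\quo{$\twist{k,r}{D}$}{$\simeq_E$}$, representativeness (which is part of $\tau$ being detecting) supplies some $(X,D)$ in $\CW{k,2r+k}{D}$ together with an equivalence $b\colon \tau(X,D) \stackrel{\cong}{\to} \partial_Y$. Hence $[\partial_Y] = [\tau(X,D)]$ lies in the image.

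For injectivity, suppose $(X,D)$ and $(Y,D)$ in $\CW{k,2r+k}{D}$ satisfy $\tau(X,D) \cong \tau(Y,D)$ via some equivalence $\overline{f}$ in $\quo{$\twist{k,r}{D}$}{$\simeq_E$}$. Fullness of $\tau$ (Theorem \ref{presexist}) produces a morphism $f\colon (X,D) \to (Y,D)$ with $\tau(f) = \overline{f}$. Since $\tau(f)$ is an equivalence and $\tau$ reflects isomorphisms, $f$ is itself a homotopy equivalence under $D$, so $(X,D)$ and $(Y,D)$ represent the same homotopy type.

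There is no real obstacle here beyond unpacking the definition of a detecting functor; all the substantive content has been absorbed into Lemma \ref{k,2r+k} and Theorem \ref{presexist}. The one point that deserves explicit mention is that an equivalence in the target category between $\tau(X,D)$ and $\tau(Y,D)$ need not, a priori, be in the image of $\tau$, which is precisely why fullness (not just essential surjectivity) is needed to pass from ``same image'' to ``isomorphic in the source''.
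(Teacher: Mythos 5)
Your proof is correct and matches the paper's (implicit) argument: the corollary is stated as an immediate consequence of Theorem \ref{presexist}, and the standard unpacking of ``detecting functor'' into representativeness (for surjectivity) plus fullness and reflection of isomorphisms (for injectivity) is exactly what is intended. Your closing remark about why fullness, not mere essential surjectivity, is needed is a fair point and consistent with the paper's definitions.
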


Corollary \ref{presexistcor} shows that certain homotopy types of relative CW--complexes can be represented by presentations in the homotopy category of $\twist{}{D}$. In the remaining sections we provide examples where these presentations are described equivalently by algebraic models.

Now we consider the ``kernel'' of the functor $\tau$ in Theorem \ref{presexist}. Let $X_2$ and $Y_2$ be objects in $\CW{k,2r+k}{D}$ with presentations $\partial_X$ and $\partial_Y$ as in Lemma \ref{k,2r+k}. Then \ref{k,2r+k} yields the coaction map $\mu: X_2 \rightarrow X_2 \vee \Sigma X''$ and the action $+$. The functor $\tau$ of Theorem \ref{presexist} gives rise to the function
\[\tau: [X_2, Y_2]^D \longrightarrow [\partial_X, \partial_Y],\]
where the right hand side is the set of morphisms in $\quo{$\twist{k,r}{D}$}{$\simeq_E$}$. As $\tau$ is full, the function is surjective.

\begin{add}\label{orbits}
Take objects $X_2$ and $Y_2$ in $\CW{k,2r+k}{D}$ with presentations $\partial_X$ and $\partial_Y$ as in Lemma \ref{k,2r+k}, and $f, g \in [X_2,Y_2]^D$. Then $\tau(f) = \tau(g)$ if and only if $f = g + i_{\ast}\alpha$ for some $\alpha \in [\Sigma X'',Y]^D$, where $i: Y \rightarrow Y_2$ is the inclusion.
\end{add}

The isotropy groups, $J_g$, of the action $+$ of $\alpha$ can be computed by V (5.7) in \cite{AH}. We obtain the natural system $\quo{$[\Sigma X'',Y]$}{$J_g$}$, so that $\tau$ is a linear extension of categories, see \cite{AH}.

\section{Crossed Modules and Cells in Dimensions $1$ and $2$}
Putting $k = 0$ and $r = 1$ in Theorem \ref{presexist}, we obtain the detecting functor
\[ \tau: \CW{0,2}{D} \longrightarrow \quo{$\twist{0,1}{D}$}{$\simeq_E$}.\]
Here the right hand side is determined by the category $\CW{1}{D}$ which depends only on the $1$--type of $D$. In Section \ref{ncells} we computed the category $\CW{1}{D}$ in terms of cofibrant objects in $\grd{\G}$, where $\G$ is a groupoid representing the $1$--type of $D$.  Considering $\G$ as a crossed module concentrated in degree $1$ and using the definition of $\twist{}{}$, we now compute the category $\CW{0,2}{D}$ in terms of cofibrant objects in the category of crossed modules under $\G$. 

Cofibrations in the category of crossed modules over groupoids are morphisms in $\cross{}{}$ which are free extensions in degrees $1$ and $2$, see \cite{4dim}. We denote the subcategory of cofibrant objects $\G \rightarrowtail M$ in $\cross{}{\G}$ with $\text{Ob}(M) = \text{Ob}(\G)$ by $\cross{c}{\G}$. Morphisms $f, g: M \rightarrow M'$ in $\cross{}{\G}$ are homotopic if there is an $f$--crossed homomorphism $\alpha: M_1 \rightarrow M_2'$ such that $\alpha|_{\G} = 0, -f_1 + g_1 = d_{M'} \alpha$ and $-f_2 + g_2 = \alpha d_M$.

\begin{thm}\label{CW02}
Let $\G$ be a groupoid representing the $1$--type of $D$. Then there is a detecting functor 
\[ \tau: \CW{0,2}{D} \longrightarrow \quo{$\cross{c}{\G}$}{$\simeq$}.\]
\end{thm}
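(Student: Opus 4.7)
The plan is to derive this theorem by combining Theorem \ref{presexist} specialized to $k=0$ and $r=1$ with an equivalence of categories
\[\Phi: \quo{$\twist{0,1}{D}$}{$\simeq_E$} \longrightarrow \quo{$\cross{c}{\G}$}{$\simeq$},\]
so that the composite $\Phi \circ \tau_0$ of $\Phi$ with the detecting functor $\tau_0$ of Theorem \ref{presexist} furnishes the required $\tau$. First I would unfold $\twist{0,1}{D}$ using Lemma \ref{k,2r+k}: its objects are presentations $\partial_X: X'' \to X$ with $X$ a $D$-cone in $\CW{1}{D}$ and $X''$ a suspension in $\CW{1}{D}$, so both $X$ and $X''$ are wedges of $1$-spheres attached to $D$. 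The mapping cone $X_2 = C_{\partial_X}$ lies in $\CW{0,2}{D}$, and by J.H.C.\ Whitehead's theorem (as used in Proposition \ref{CW2dcrossceq}) the pair gives rise to the cofibrant crossed module over groupoids
\[\Phi(\partial_X) \;=\; \bigl(\pi_2(X_2, X) \stackrel{\partial}{\longrightarrow} \pi_1(X)\bigr),\]
where $\pi_1(X)$ is cofibrant under $\G$ by the equivalence $\CW{1}{D} \simeq \grd{\G}_c$ of Section \ref{ncells}, $\pi_2(X_2, X)$ is the free crossed $\pi_1(X)$-module on the $2$-cells indexed by $X''$, and the identity on objects places the result in $\cross{c}{\G}$.

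On morphisms, a map $(f'', f): \partial_X \to \partial_Y$ in $\twistD$ induces a twisted map $\overline f: X_2 \to Y_2$ (Remark \ref{maponX_2}); taking $\pi_1$ and $\pi_2$ yields a morphism of crossed modules over $\G$, giving the functor $\Phi$. To see $\Phi$ is representative and full at the level of homotopy categories, I would realize any cofibrant object $\G \rightarrowtail M$ of $\cross{c}{\G}$ topologically: attach $1$-cells to $D$ to build a CW-model $X$ with $\pi_1(X) \cong M_1$ (using $\CW{1}{D} \simeq \grd{\G}_c$), then attach $2$-cells along boundaries prescribed by a set of free generators of $M_2$, producing a presentation $\partial_X$ with $\Phi(\partial_X) \cong M$. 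Every crossed-module morphism $M \to M'$ is then realized by choosing topological representatives for the images of the free $2$-cell generators, giving a map $(f'', f)$ whose image under $\Phi$ is the given morphism.

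The main obstacle is matching the equivalence relations, that is, proving that $\simeq_E$ on $\twist{0,1}{D}$ corresponds exactly to homotopy of crossed modules under $\G$. A datum witnessing $(f'', f) \simeq_E (g'', g)$ is a map $\alpha: X' \to Y'' \vee Y$ with $(0_{Y''}, 1) \alpha = 0_{X'}$ satisfying $g = f + (\partial_Y, 1) \alpha$ and
\[-E(f'') \odot u + E(g'') \odot u \;=\; E(\alpha) \odot u\, d_X\]
in $\modD$, whereas a crossed-module homotopy is an $f_1$-crossed homomorphism $\alpha: M_1 \to M_2'$ with $\alpha|_\G = 0$, $-f_1 + g_1 = d_{M'}\alpha$, and $-f_2 + g_2 = \alpha\, d_M$. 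In the low-dimensional situation at hand, $X'$ is a wedge of $2$-spheres indexed by the $1$-cells of $X$, and I expect to identify $[X', Y'' \vee Y]^D_2$ naturally with the set of crossed homomorphisms $M_1 \to M_2'$ vanishing on $\G$. The plan is then to check, by computing the partial suspension $E$ and the difference operator $\nabla$ in this setting, that the first condition of $\simeq_E$ becomes $-f_1 + g_1 = d_{M'}\alpha$ and the second condition becomes $-f_2 + g_2 = \alpha\, d_M$. Once this dictionary is established, $\Phi$ descends to an equivalence on homotopy categories, and the detecting property of $\tau$ is inherited from $\tau_0$ via $\Phi$.
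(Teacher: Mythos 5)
Your proposal is correct and follows essentially the same route as the paper, which obtains Theorem \ref{CW02} precisely by specializing Theorem \ref{presexist} to $k=0$, $r=1$ and then identifying $\quo{$\twist{0,1}{D}$}{$\simeq_E$}$ with $\quo{$\cross{c}{\G}$}{$\simeq$}$ via the equivalence $\CW{1}{D}\simeq \grd{\G}_c$, Whitehead's theorem on free crossed modules, and the definitions of $\simeq_E$ and of crossed--module homotopy; the paper supplies no more detail on that final dictionary than you do. One small correction: since $X$ has cells in dimension $1$ only, $X'=\Sigma A$ is a wedge of $1$--spheres attached to $D$, not $2$--spheres, and it is precisely for this reason that $[X', Y''\vee Y]^D_2$ records crossed homomorphisms $M_1\to M_2'$ rather than module maps.
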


If $D = \ast$ is a point, $\tau$ is an equivalence of categories and Theorem \ref{CW02} yields a classical result due to J.H.C. Whitehead \cite{Whitehead3}. The corresponding result for $\CW{0,2}{D,\ast}$, with $D$ connected and pointed, is discussed in VI.3 of \cite{AH}.

\section{Crossed Complexes and Cells in Dimensions $2$ and $3$}
In this Section we apply Theorem \ref{presexist} to the homotopy category, $\CW{1,3}{D}$, of relative CW--complexes with cells in dimension two and three only. Lemma \ref{CWnDisfull} implies that a relative CW--complex, $(X,D)$, in $\CW{1,3}{D}$ is equivalent to a $D$--cone. However, here we use Lemma \ref{k,2r+k} which guarantees a presentation $\partial_X$ of $X$ in $\CW{2}{D}$, rather than the $D$--cone structure. 

By Lemma \ref{CWnDisfull}, $\CW{2}{D}$ is a full subcategory of $\coneD$. Hence we may consider presentations in $\CW{2}{D}$, that is, presentations $\partial_X: X'' \rightarrow X$ with $X$ in $\CW{2}{D}$ and $X''$ a suspension in $\CW{2}{D}$. Such presentations correspond to attaching maps of $3$--cells of a CW--complex in $\CW{1,3}{D}$ and have algebraic models due to the equivalence $\tau$ in Theorem \ref{CW2dcrossceq}. We recall the notion of \emph{crossed complexes over groupoids of dimension $3$} to describe these algebraic models.

A \emph{$3$--dimensional crossed complex over the groupoid $M_1$} is a sequence
\[ \xymatrix{
M_3 \ar[r]^{\partial_3} &  M_2 \ar[r]^{\partial_2} & M_1,}\]
such that
\begin{enumerate}
\item $\partial_2: M_2 \rightarrow M_1$ is a crossed module over groupoids; 
\item $M_3$ is a totally disconnected abelian groupoid with $\text{Ob}(M_3) = \text{Ob}(M_1)$ and
\item $\partial_3$ is a functor which is the identity on objects, such that $\partial_2 \partial_3 = 0$ and $\text{im}(\partial_2)$ acts trivially on $M_3$.
\end{enumerate}

A \emph{morphism of $3$--dimensional crossed complexes}, $f: M \rightarrow N$, is a sequence of morphisms $f_i: M_i \rightarrow N_i$, $i = 1, 2, 3$, of groupoids inducing the same map on objects and compatible with the boundary maps and the actions of both crossed chain complexes. A morphism $f$ of $3$--dimensional crossed complexes is a {\emph{cofibration} if $f$ is a free extension in every degree, see \cite{4dim}. Given a crossed module $\D$ representing the $2$--type of the space $D$, we consider cofibrant objects $q: \D \rightarrowtail M$ in the category of $3$--dimensional crossed complexes under $\D$ of the form
\[\xymatrix{
& \D_2 \ar[r] \ar[d]_{q_2} & \D_1 \ar@{=}[d]^{q_1} \\
M_3 \ar[r]_{\partial_3} & M_2 \ar[r]_{\partial_2} & M_1,}\]
where $q_1$ is the identity, and denote the category of such cofibrant objects by $\cross{3,c}{\D}$. Morphisms $f, g: M \rightarrow M'$ in $\cross{3,c}{\D}$ are \emph{homotopic} if there is an $f_1$--equivariant homomorphism of modules $\alpha: M_2 \rightarrow M_3'$ such that 
\begin{eqnarray*}
\alpha q_2 & = & 0,  \\
-f_2 + g_2 & = & \partial_3' \alpha, \\
-f_3 + g_3 & = & \alpha \partial_3,
\end{eqnarray*}
and we denote the resulting homotopy category by $\quo{$\cross{3,c}{\D}$}{$\simeq$}$.

\begin{thm}\label{CW13det}
Let $\D$ be a crossed module representing the $2$--type of $D$ with $\D_1$ a free groupoid. Then there is a detecting functor
\[\tau: \CW{1,3}{D} \longrightarrow \quo{$\cross{3,c}{\D}$}{$\simeq$}.\]
\end{thm}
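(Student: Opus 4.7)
The plan is to deduce the theorem by combining Theorem \ref{presexist} with an equivalence between the resulting category of presentations and the algebraic category $\cross{3,c}{\D}$, modulo their respective homotopy relations. Substituting $k = r = 1$ into Theorem \ref{presexist} gives a detecting functor
\[
\tau_1 : \CW{1,3}{D} \longrightarrow \quo{$\twist{1,1}{D}$}{$\simeq_E$,}
\]
whose target consists of presentations $\partial_X : X'' \to X$ with both $X$ (a $D$-cone) and $X''$ (a suspension) lying in $\CW{2}{D}$, according to the index count in Lemma \ref{k,2r+k}.

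The heart of the proof is the construction of an equivalence of homotopy categories $\Phi : \quo{$\twist{1,1}{D}$}{$\simeq_E$} \stackrel{\sim}{\longrightarrow} \quo{$\cross{3,c}{\D}$}{$\simeq$}$. For $\partial_X : X'' \to X$ in $\twist{1,1}{D}$, apply the equivalence $\tau$ of Proposition \ref{CW2dcrossceq} separately to $X$ and to $X''$ to obtain cofibrant crossed modules $\tau(X), \tau(X'')$ over $\D$ together with an induced morphism $\tau(\partial_X)$. Because $X''$ is a suspension, its 2-cells are attached trivially and $\tau(X'')$ is the coproduct of $\D$ with a free $\D_1$-module $F$ placed in degree $2$ with zero boundary. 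Define $\Phi(\partial_X)$ as the sequence $M_3 \xrightarrow{\partial_3} M_2 \xrightarrow{\partial_2} M_1$ with $M_1 := \D_1$, $M_2 := (\tau X)_2$ carrying its boundary $\partial_2$, $M_3 := F \otimes_{\Z \D_1} \Z \pi_1(X)$, and $\partial_3$ induced by the restriction of $\tau(\partial_X)_2$ to $F$. The identity $\partial_2 \partial_3 = 0$ holds because the attaching maps of the 2-cells of $X''$ are trivial, so $\partial_3(F) \subseteq \ker(\partial_2)$; triviality of the $\im(\partial_2)$-action on $M_3$ is built into the co-invariant construction.

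Next, verify that $\Phi$ descends to homotopy categories. The $\partial$-equivalence datum $\alpha : X' \to Y'' \vee Y$ with orthogonality $(0_{Y''}, 1_Y)\alpha = 0_{X'}$ translates, via the partial suspension $E$ of Section $6$ together with Proposition \ref{CW2dcrossceq}, into an $f_1$-equivariant homomorphism $\alpha : M_2 \to M_3'$ satisfying $\alpha q_2 = 0$; the orthogonality is exactly what forces $\alpha$ to vanish on $q_2(\D_2)$. The equation $g = f + (\partial_Y, 1_Y)\alpha$ unfolds to $-f_2 + g_2 = \partial_3' \alpha$, while the second equation in Definition and Lemma \ref{Kdfn} unfolds to $-f_3 + g_3 = \alpha \partial_3$, recovering precisely the chain-homotopy relation in $\cross{3,c}{\D}$. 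Essential surjectivity and full faithfulness of $\Phi$ follow from Proposition \ref{CW2dcrossceq} applied degreewise, together with Whitehead's realization theorem \cite{Whitehead3} lifting any morphism of cofibrant 3-dimensional crossed complexes under $\D$ to one of presentations.

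The composition $\Phi \circ \tau_1$ is the sought detecting functor. The main obstacle is the step in the previous paragraph: carefully matching $\simeq_E$ with chain homotopy of 3-dimensional crossed complexes. This requires unpacking the effect of the partial suspension operator $E$ on the difference maps $\nabla f, \nabla g$, and confirming that the orthogonality condition $(0_{Y''}, 1_Y)\alpha = 0_{X'}$ corresponds precisely to the algebraic condition $\alpha q_2 = 0$, so that the two defining equations of $\simeq_E$ produce in parallel the two boundary-compatibility axioms of chain homotopy in $\cross{3,c}{\D}$.
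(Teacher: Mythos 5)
Your proposal is correct and follows essentially the same route as the paper's (very terse) proof: specialize Theorem \ref{presexist} to $k=r=1$, so that presentations live in $\CW{2}{D}$, and translate them into $3$--dimensional crossed complexes via the equivalence of Proposition \ref{CW2dcrossceq}, matching $\simeq_E$ with the chain homotopy relation in $\cross{3,c}{\D}$. The paper compresses all of this into the remark that such presentations are attaching maps of $3$--cells, equivalently boundaries $\partial_3$, with morphisms principal and corresponding to crossed--complex maps; your explicit description of the $\pi_1$--module structure on $M_3$ and of the homotopy comparison supplies exactly the detail the paper leaves implicit.
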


\begin{proof}
A presentation $\partial_X$ in $\CW{2}{D}$ corresponds to an attaching map of $3$--cells of a relative CW--complex $(X, D)$ in $\CW{1,3}{D}$ and is equivalently given by the boundary $\partial_3$. Morphisms between such presentations are principal and correspond to maps between crossed complexes.
\end{proof}

Note that there is a corresponding detecting functor for $\CW{1.3}{D,\ast}$ with $D$ pointed and connected involving crossed complexes over groups.

\begin{obs}
There is a result similar to Theorem \ref{CW13det} for $\CW{0,3}{D,\ast}$. In this case $q_1$ above is not the identity, but a free extension.
\end{obs}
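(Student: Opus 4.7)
The plan is to mirror the proof of Theorem \ref{CW13det} while allowing 1--cells. Given $X \in \CW{0,3}{D,\ast}$, filter $D = X^0 \subset X^1 \subset X^2 \subset X = X^3$. By the pointed counterpart of Theorem \ref{CW02} alluded to in the remark at the end of Section 9 (cf.\ also VI.3 of \cite{AH}), the pair $(X^2, D)$ in $\CW{0,2}{D,\ast}$ is classified, up to homotopy relative $D$, by a cofibrant crossed module $q^{\leq 2}: \D \rightarrowtail M^{\leq 2}$ over groups under $\D$ in which $q_1: \D_1 \to M_1$ is a free extension of groups (one generator per 1--cell of $X$) and $q_2: \D_2 \to M_2$ is the induced free extension of crossed modules. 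The 3--cells of $X$ are then attached by a presentation $\partial_X: X'' \to X^2$ with $X''$ a wedge of 2--spheres indexed by the 3--cells; this produces a free $M_1$--module $M_3$ generated by the 3--cells and an $M_1$--equivariant boundary $\partial_3: M_3 \to M_2$ with image in $\ker \partial_2$.

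Next, define the target category to consist of cofibrant $3$--dimensional crossed complexes $q: \D \rightarrowtail M$ over groups under $\D$ with $q_1$ a free extension, $q_2$ the corresponding free extension of crossed modules, $M_3$ a free $M_1$--module, and $\partial_3$ an equivariant homomorphism into $\ker \partial_2$. Homotopies now include a degree--1 component: a crossed homomorphism $\beta: M_1 \to M_2'$ with $\beta|_{\D_1} = 0$ and $-f_1 + g_1 = \partial_2' \beta$, alongside the degree--2 crossed homomorphism $\alpha: M_2 \to M_3'$ with $\alpha q_2 = 0$, $-f_2 + g_2 = \partial_3' \alpha$ (suitably corrected using $\beta$ so as to remain compatible with the new $M_1$--action), and $-f_3 + g_3 = \alpha \partial_3$. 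The functor $\tau$ sends $X$ to the sequence $\pi_3(X, X^2) \to \pi_2(X^2, X^1) \to \pi_1(X^1)$ together with the structural maps from $\D$, and morphisms to the induced maps on these groupoid/module data.

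That $\tau$ is detecting is verified as in Theorem \ref{CW13det}. Representativity is handled by geometric realization: given a cofibrant object, build $X$ by attaching 1--cells (realizing $q_1$ freely), 2--cells (realizing $q_2$ and $\partial_2$), and 3--cells (realizing $M_3$ with attaching maps in $\pi_2(X^2,X^1)$ read off from $\partial_3$). Reflection of isomorphisms follows from the relative Whitehead theorem, since a $\tau$--equivalence induces isomorphisms on $\pi_i(X,D)$ for $i \leq 3$, and the realization has no cells above dimension 3. Fullness modulo homotopy is proved cell by cell: a morphism of algebraic data can be realized skeleton--by--skeleton, and the freedom in choosing extensions over each cell dimension is exactly absorbed by the $(\alpha,\beta)$--homotopy above.

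The main obstacle is the proper treatment of the degree--1 homotopy component $\beta$: because $q_1$ is a free extension rather than the identity, two maps under $D$ realizing the same algebraic morphism need not agree on $X^1$, only up to a homotopy of 1--cells parametrized by elements of $M_2'$. One must verify that this is exactly matched by the geometric notion of homotopy relative $D$ in $\CW{0,3}{D,\ast}$, and that the Peiffer identity together with the enlarged $M_1$--action remain compatible with the extra $\beta$--correction appearing in the formula for $-f_2 + g_2$; this is the technical heart of extending the proof of Theorem \ref{CW13det} to the present setting.
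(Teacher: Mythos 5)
Your proposal is correct in outline and follows exactly the route the paper intends for this remark: it is the proof of Theorem \ref{CW13det} redone with the $1$--cells of $X$ giving a free extension $q_1: \D_1 \rightarrowtail M_1$ (rather than the identity), the functor being the fundamental crossed complex $\pi_3(X,X^2) \to \pi_2(X^2,X^1) \to \pi_1(X^1)$ of the skeletal filtration, and the homotopies acquiring the extra degree--$1$ crossed homomorphism $\beta$ exactly as in Theorem \ref{CW02}; the paper itself gives no proof of the remark, so your identification of the $\beta$--correction term in the formula for $-f_2+g_2$ as the technical point to be checked is the right place to focus. The only imprecision worth noting is that $M_3$ is free as a module over $\coker(\partial_2)\cong\pi_1(X^2)$ (the image of $\partial_2$ acts trivially, as in the paper's definition of $3$--dimensional crossed complexes), not over $M_1$ itself.
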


\section{Quadratic Modules and Cells in Dimensions $2$, $3$ and $4$}
In this Section we provide a detecting functor for the category $\CW{1,4}{D,\ast}$ of pointed relative CW--complexes with cells in dimensions $2$, $3$ and $4$ only. The attaching map of $4$--cells yields a presentation $\partial_X$ in the category $\CW{1,3}{D,\ast}$ which is a subcategory of $\coneD$ by Lemma \ref{k,k+r}. The algebraic description of $\CW{1,3}{D,\ast}$ in terms of quadratic modules in Section \ref{quadmod} leads to an algebraic description of the presentation $\partial_X$ for which we require the notion of \emph{$4$--dimensional quadratic complexes}. Such a complex is a diagram
\[\xymatrix{
&& C_2 \otimes C_2 \ar[dl]_{\omega} \ar[d] & \\
Q_4 \ar[r]_{\partial_4} &  Q_3 \ar[r]_{\partial_3} & Q_2 \ar[r]_{\partial_2}  & Q_1,}\]
such that
\begin{enumerate}
\item $(\partial_2, \partial_3, \omega)$ is a quadratic module;
\item $Q_4$ is a $Q_1$--module such that $\partial_2(Q_2)$ acts trivially and
\item $\partial_4$ is a homomorphism of $Q_1$--modules, such that $\partial_3\partial_4 = 0$.
\end{enumerate}
A \emph{morphism of $4$--dimensional quadratic complexes}, $f: Q \rightarrow Q'$, is a sequence of morphisms $f_i: M_i \rightarrow N_i$, $1 \leq i \leq 4$, such that $(f_3, f_2, f_1)$ yields a morphism of quadratic complexes, $f_4$ is an $f_1$--equivariant homomorphism of modules and $\partial_4 f_4 = f_3 \partial_4$. A morphism of $4$--dimensional quadratic complexes is a \emph{cofibration} if it is a free extension in every degree, see \cite{4dim}. Given a quadratic module $\D$ representing the $3$--type of the connected and pointed space $D$, we consider the category of cofibrant objects $q: \D \rightarrowtail Q$ in the category of $4$--dimensional quadratic complexes under $\D$ of the form
\[\xymatrix{
& \D_3 \ar[r] \ar[d]_{q_3} & \D_2 \ar[r] \ar[d]^{q_2} & \D_1 \ar@{=}[d]^{q_1} \\
Q_4 \ar[r]_{\partial_4} & Q_3 \ar[r]_{\partial_3} & Q_2 \ar[r]_{\partial_2} & Q_1,}\]
where $q_1$ is the identity. We denote the category of such objects by $\qud{4,c}{\D}$. Morphisms $f, g: Q \rightarrow Q'$ in $\qud{4,c}{\D}$ are \emph{homotopic} if there there are functions $\alpha_2: Q_2 \rightarrow Q_3'$ and $\alpha_3: Q_3 \rightarrow Q_4'$ which determine a homotopy in the category of quadratic chain complexes as in IV.4 in \cite{4dim}, such that $\alpha_2q_2 = 0$ and $\alpha_3q_3 = 0$. We denote the resulting homotopy category by $\quo{$\qud{4,c}{\D}$}{$\simeq$}$ and provide an explicit definition of homotopy for the reduced case in the next section.

\begin{thm}\label{CW14}
Let $\D$ be a quadratic module representing the $3$--type of the pointed and connected space $D$, which is free in degrees $1$ and $2$. Then there is a detecting functor
\[\tau: \CW{1,4}{D,\ast} \longrightarrow \quo{$\qud{4,c}{\D}$}{$\simeq$}.\]
\end{thm}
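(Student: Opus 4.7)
The plan is to parallel the proof of Theorem \ref{CW13det}, but one dimension higher: a complex $(Y,D) \in \CW{1,4}{D,\ast}$ is built from its $3$-skeleton $Y^3 \in \CW{1,3}{D,\ast}$ by attaching $4$-cells, and Proposition \ref{CW13} supplies the algebraic model of the $3$-skeleton as a cofibrant quadratic module under $\D$. The extra $4$-cells produce the top piece $\partial_4 \colon Q_4 \to Q_3$ of a $4$-dimensional quadratic complex.

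First I would decompose: for $(Y,D) \in \CW{1,4}{D,\ast}$ the attaching map of $4$-cells is a map $w_4 \colon \Sigma A'' \to Y^3$ in $\coneD$, where $A''$ is the based object under $D$ carrying the set of $4$-cells. This presents $Y$ as the mapping cone of a principal map analogous to Lemma \ref{k,2r+k}, with $Y^3$ in place of $X$ and $\Sigma A''$ in place of $X''$. By Proposition \ref{CW13}, $Y^3$ corresponds (up to homotopy under $D$) to a cofibrant quadratic module $Q^{(3)} = (\omega, \partial_3, \partial_2) \in \qud{c}{\D}$ with $q_1$ the identity. Let $Q_4$ be the free right $Q_1$-module on the set of $4$-cells. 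Passing $w_4$ through the partial suspension $E$ and the functor $K$ of Section 7 yields a $Q_1$-equivariant homomorphism $\partial_4\colon Q_4 \to Q_3$: the identity $\partial_3 \partial_4 = 0$ follows from the cellular boundary relation $\partial w_4 = 0$ in the long exact sequence of $(Y^3, D)$, and $\partial_2(Q_2)$ acts trivially on $Q_4$ because $\pi_3$ of a $3$-dimensional complex is a module over $\pi_1$ on which the image of $\pi_2$ of the $2$-skeleton acts trivially. Putting these data together defines the functor $\tau(Y) \in \qud{4,c}{\D}$.

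For morphisms, a cellular map $f \colon (Y,D) \to (Y',D)$ restricts to a map of $3$-skeleta, giving $(f_1, f_2, f_3)$ via Proposition \ref{CW13}, and the induced map on $4$-chains gives the $f_1$-equivariant $f_4$ with $\partial_4 f_4 = f_3 \partial_4$. Relative-$D$ homotopies produce the pair of functions $(\alpha_2, \alpha_3)$ required in the homotopy relation on $\qud{4,c}{\D}$: the $\alpha_2$-part already appears in the $\CW{1,3}{D,\ast}$ case handled by Proposition \ref{CW13}, while $\alpha_3$ is the secondary track on $4$-cells. The three defining properties of a detecting functor are then verified as follows. Representativeness: starting from a cofibrant $Q \in \qud{4,c}{\D}$, realize its truncation $Q^{(3)}$ by Proposition \ref{CW13}, then attach one $4$-cell for each free generator of $Q_4$ along a representative of its image under $\partial_4$. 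Fullness and compatibility with homotopies: cellular approximation plus obstruction theory let one realize $f_4$ and $\alpha_3$ from their algebraic data, since the obstructions live in relative homotopy groups already identified with $Q_3$ and $Q_4$. Reflection of isomorphisms: a morphism inducing isomorphisms on each $Q_i$ induces isomorphisms on relative $\pi_i$ through dimension $4$, hence a homotopy equivalence under $D$ by J.H.C.~Whitehead's theorem.

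The main obstacle is the homotopy matching. Specifically, one must check that a topological homotopy under $D$ between cellular maps produces a pair $(\alpha_2,\alpha_3)$ satisfying the quadratic-complex identities, including the cross-effect terms involving $\omega$, and conversely that every such algebraic pair is realizable by a cellular homotopy. This is essentially a one-dimension extension of Theorem IV.8.1 in \cite{4dim} (invoked in the proof of Proposition \ref{CW13}) and amounts to a careful bookkeeping of Peiffer/Whitehead-product corrections between dimensions $3$ and $4$; once this is handled, the rest of the proof is structural and follows the template of Theorem \ref{CW13det}.
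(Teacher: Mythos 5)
Your proposal is correct and follows essentially the same route the paper takes: present $(Y,D)$ by the attaching map of its $4$--cells as a presentation over the $3$--skeleton in $\CW{1,3}{D,\ast}$, translate the $3$--skeleton via Proposition \ref{CW13} and the attaching data into $\partial_4\colon Q_4\to Q_3$, and obtain the detecting-functor properties and the homotopy pair $(\alpha_2,\alpha_3)$ from Theorem \ref{presexist} together with Theorem IV.8.1 of \cite{4dim}. The ``main obstacle'' you flag is exactly the point the paper disposes of in the remark following the theorem, identifying $\alpha_2$ with the homotopies of Theorem \ref{CW13} and $\alpha_3$ with the relation $\simeq_E$ of Theorem \ref{presexist}.
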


The homotopies in $\qud{4,c}{\D}$ are given by $\alpha_2$ and $\alpha_3$ as above, with $\alpha_2$ arising from Theorem \ref{CW13}, and $\alpha_3$ corresponding to the homotopy $\simeq_E$ in $\twist{}{D}$ according to Theorem \ref{presexist}. For $D = \ast$, the homotopy types of $\CW{1,4}{\ast}$ were classified by J.H.C. Whitehead in \cite{Whitehead1} and \cite{Whitehead3}.

\begin{obs}
There is a result similar to Theorem \ref{CW14} for $\CW{0,4}{D,\ast}$. In this case $q_1$ above is not the identity, but a free extension. For a proof one needs the theory in \cite{4dim}.
\end{obs}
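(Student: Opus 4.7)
The plan is to bootstrap from Theorem \ref{CW14} by stripping off the $1$-cells. Given $(X,D)$ in $\CW{0,4}{D,\ast}$ with $1$-skeleton $X^1$, the pointed assumption forces $X^0 = D$ and $X^1$ to be obtained from $D$ by wedging on a bouquet of circles, one for each $1$-cell. Hence $(X, X^1)$ lies in $\CW{1,4}{X^1,\ast}$, and I would construct the detecting functor $\tau$ by first applying Theorem \ref{CW14} over $X^1$ and then re-expressing the output as a quadratic complex under $\D$.

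Concretely, starting from a quadratic module $\D$ representing the $3$-type of $D$ and free in degrees $1$ and $2$, I would construct a quadratic module $\D^+$ representing the $3$-type of $X^1$ together with a free extension $\D \rightarrowtail \D^+$ concentrated in degree $1$. Since $X^1$ is a bouquet of circles over $D$, $\pi_1(X^1)$ is the free product of $\pi_1(D)$ with a free group on the set of $1$-cells, and the base-change machinery for quadratic modules developed in \cite{4dim} produces $\D^+$ by inducing $\D_2, \D_3$ up along $\D_1 \hookrightarrow \pi_1(X^1)$ while preserving the $\text{nil}(2)$-structure, the quadratic map $\omega$, and the action. Applying Theorem \ref{CW14} to $(X, X^1)$ over $\D^+$ yields a cofibrant object $Q \in \qud{4,c}{\D^+}$ with $q_1^+ = \id$; re-reading $Q$ as sitting under $\D$ via the composite $\D \rightarrowtail \D^+ \rightarrowtail Q$ makes the degree-$1$ component $q_1 : \D_1 \rightarrowtail Q_1$ a free extension, as required.

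For functoriality, I would invoke cellular approximation relative $D$ to replace every morphism $f : X \to Y$ in $\CW{0,4}{D,\ast}$ by a cellular representative sending $X^1$ into $Y^1$. Such an approximation induces a map of $1$-skeleta and hence, after pushing the target quadratic complex forward along the induced $\pi_1(X^1) \to \pi_1(Y^1)$, a morphism of quadratic complexes by the functoriality part of Theorem \ref{CW14} applied over $X^1$. Distinct cellular approximations differ by homotopies, and the corresponding pairs $(\alpha_2, \alpha_3)$ of quadratic homotopies, together with the comparison with $\simeq_E$ in $\twist{}{D}$, are again available through \cite{4dim}, so $\tau$ descends to a well-defined functor on the homotopy category $\quo{$\CW{0,4}{D,\ast}$}{$\simeq$}$.

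The detecting properties follow by splicing: representability, because any free extension $\D_1 \rightarrowtail Q_1$ can be realized by attaching $1$-cells to $D$, after which $q_2, q_3$ and $\partial_4$ are realized by higher cells via Theorem \ref{CW14}; fullness and reflection of isomorphisms, because they hold over $X^1$ by Theorem \ref{CW14}, while the degree-$1$ component amounts to the classical equivalence between free group extensions of $\pi_1(D)$ and $1$-cell attachments to $D$. The main obstacle is precisely the base-change of quadratic modules along a free extension in degree $1$: one must check that the Peiffer commutator map $w$, the $Q_1$-action, and the quadratic lift $\omega$ transport coherently along $\D_1 \hookrightarrow Q_1$, and that this base-change converts the homotopy relation on $\qud{4,c}{\D^+}$ into the relation on $\qud{4,c}{\D}$ compatibly with $\simeq_E$. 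This is exactly the portion of the argument that the observation flags as requiring the theory of \cite{4dim}.
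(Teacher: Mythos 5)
The paper offers no argument for this remark beyond the pointer to \cite{4dim}, so there is nothing to match your proof against line by line; I can only assess your reduction on its own terms. Your strategy --- strip the $1$--cells, apply Theorem \ref{CW14} over $X^1$, and re-read the resulting object of $\qud{4,c}{\D^+}$ as an object under $\D$ with $q_1$ a free extension --- is reasonable at the level of objects, and you correctly locate the free extension in degree $1$ as the place where \cite{4dim} must be invoked. However, your description of $\D^+$ as obtained by ``inducing $\D_2, \D_3$ up'' is wrong as stated in degree $3$: for $X^1$ a wedge of circles on $D$, $\pi_3(X^1)$ is not the induced module of $\pi_3(D)$; it acquires new elements from Whitehead products of translates of $2$--dimensional classes under distinct elements of the enlarged fundamental group. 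Algebraically these are the values of $\omega^+$ on the new cross terms of $C^+ \otimes C^+$, where $C^+ = ((\D^+_2)^{cr})^{ab}$ comes from the free extension of $\text{nil}(2)$--modules in degree $2$ (itself nonabelian, so not a tensor induction either). The correct construction is the pushout in the category of quadratic modules, and verifying that this models the $3$--type of $X^1$ is precisely the substantive content of the appeal to \cite{4dim}; plain module induction produces the wrong $3$--type.

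More seriously, your treatment of morphisms computes the wrong equivalence relation. Morphisms in $\CW{0,4}{D,\ast}$ are maps and homotopies under $D$, whereas Theorem \ref{CW14} applied over $X^1$ classifies maps and homotopies relative $X^1$, which is strictly finer. Two cellular approximations of the same map under $D$ can restrict to maps $X^1 \rightarrow Y^1$ that are homotopic only after passing into $Y^2$, so the induced homomorphisms $f_1, g_1: Q_1 \rightarrow Q_1'$ genuinely differ --- by $\partial_2'\alpha_1$ for a degree--one homotopy operator $\alpha_1: Q_1 \rightarrow Q_2'$ vanishing on $\D_1$, exactly as in the crossed--module homotopies of Theorem \ref{CW02}. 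Your homotopy relation retains only $(\alpha_2, \alpha_3)$; without the $\alpha_1$ component your $\tau$ is not well defined on homotopy classes, and even after repairing well--definedness it would not reflect isomorphisms. Note also that you cannot lean on $\simeq_E$ in $\twist{}{D}$ here: Theorem \ref{presexist} requires $k \geq r-1$, which fails for $k=0$ and $2r+k=4$, so the paper's presentation machinery does not reach $\CW{0,4}{D}$ directly --- another reason the remark defers entirely to the theory of \cite{4dim}.
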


\section{Reduced Quadratic Modules}
Reduced quadratic modules, that is, quadratic modules $Q$ with $Q_1  = 0$, are algebraic models of simply connected $3$--types. We recall the precise definition. A \emph{reduced quadratic module}, $Q = (\omega, \partial_3)$, is a diagram
\[ \xymatrix{ 
C \otimes C \ar[r]^{\omega} & Q_3 \ar[r]^{\partial_3} & Q_2,}\]
in $\gr{}$, such that
\begin{enumerate}
\item $Q_2$ is a $\text{nil}(2)$--group (that is, triple commutators in $Q_2$ are trivial) with $C = Q_2^{ab}$, where $Q^{ab}$ is the abelianization of $Q_2$ with quotient map $Q_2 \twoheadrightarrow Q_2^{ab}, x \mapsto \{x\}$;
\item The composition $\partial_3 \omega$ is the commutator map, that is, for $x, y \in Q_2$, \[\partial_3 \omega (\{x\} \otimes \{y\}) = -x-y+x+y = (x,y);\]
\item For $q \in Q_3$ and $x \in Q_2$, \[ \omega(\{\partial_3q\} \otimes \{x\} + \{x\}\otimes \{\partial_3q\}) = 0 \quad \text{and}\]
\item Commutators in $Q_3$ satisfy, for $p, q \in Q_3$, \[(p,q) = -p-q+p+q = \omega(\{\partial_3p\}\otimes\{\partial_3q\}).\]
\end{enumerate}
A \emph{morphism}, $f$, of reduced quadratic modules is a pair of homomorphisms, $f_i: Q_i \rightarrow Q_i', i = 1, 2$, such that $f_2\partial_3 = \partial_3'f_3$ and $q_3 \omega = \omega' (f_2^{ab} \otimes f_2^{ab})$, where $f_2^{ab}$ is the homomorphism induced by $f_2$ on the abelianizations. This defines the category $\rqud{}{}$ of reduced quadratic modules. 

A \emph{$4$--dimensional reduced quadratic complex}, $Q$, is a commutative diagram
\[\xymatrix{
&& C \otimes C \ar[dl]_{\omega} \ar[d] \\
Q_4 \ar[r]_{\partial_4} & Q_3 \ar[r]_{\partial_3} & Q_2}\]
in $\gr{}$, such that $Q_4$ is an abelian group, $(\partial_3, \omega)$ is a reduced quadratic module with $C = Q_2^{ab}$ and $\partial_3\partial_4 = 0$. A \emph{morphism} $f$ of reduced quadratic modules is given by group homomorphisms $f_i: Q_i \rightarrow Q_i', i = 2, 3, 4$, such that $(f_3, f_2)$ is a morphism of reduced quadratic modules and $f_3\partial_4 = \partial_4'f_3$.  

Given a reduced quadratic module $\D$, we may view $\D$ as a $4$--dimensional reduced quadratic complex concentrated in degrees $2$ and $3$. Defining {\emph{cofibrations} in $\rqud{}{}$ via free extensions, similarly to IV.(C.4) in \cite{4dim},  we obtain the category $\rqud{4,c}{\D}$ of $4$--dimensional cofibrant reduced quadratic complexes under $\D$. Two morphisms $f, g: Q \rightarrow Q'$ in $\rqud{4,c}{\D}$ are \emph{homotopic} if there is a group homomorphism $\alpha_3: Q_3 \rightarrow Q_4'$ and a function  $\alpha_2: Q_2 \rightarrow Q_3'$, such that $\alpha_2|_{\D_2} = 0, \alpha_3|_{\D_3} = 0$, 
\begin{eqnarray}\label{homotopy1}
-f_2 + g_2 & = & \partial_3 \alpha_2,\\
-f_3 + g_3 & = & \partial_4' \alpha_3 + \alpha_2\partial_3,\\
-f_4 + g_4 & = & \alpha_3 \partial_4
\end{eqnarray}
and
\begin{equation}\label{homotopy2}
\alpha_2(x+y) = \alpha_2 x + \alpha_2 y + \omega'(\{-f_2x + g_2x\} \otimes \{f_2 y\}).
\end{equation}
We denote the resulting homotopy category by $\quo{$\rqud{4,c}{\D}$}{$\simeq$}$. As $\rqud{4,c}{\D}$ is a subcategory of $\qud{4,c}{\D}$, Theorem \ref{CW14} yields the following result as a special case.

\begin{thm}\label{CW14ast}
Let $\D$ be a reduced quadratic module representing the $3$--type of the pointed simply connected space $D$ and assume that $\D$ is free in degree $2$. Then there is a detecting functor
\[\tau: \CW{1,4}{D,\ast} \longrightarrow \quo{$\rqud{4,c}{\D}$}{$\simeq$}.\]
\end{thm}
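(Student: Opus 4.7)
The plan is to deduce Theorem~\ref{CW14ast} from Theorem~\ref{CW14} by realising $\rqud{4,c}{\D}$, under the present hypotheses, as precisely the category $\qud{4,c}{\D}$, and then restricting the detecting functor. Since $D$ is pointed and simply connected, its $3$-type admits a model by a reduced quadratic module, that is, a quadratic module $\D$ with $\D_1 = 0$. Such a $\D$ is automatically free in degree $1$, and freeness in degree $2$ is exactly the hypothesis of the theorem, so Theorem~\ref{CW14} applies and supplies a detecting functor $\tau: \CW{1,4}{D,\ast} \rightarrow \quo{$\qud{4,c}{\D}$}{$\simeq$}$.

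Next I would identify $\qud{4,c}{\D}$ with $\rqud{4,c}{\D}$ on the nose. For a cofibrant $q: \D \rightarrowtail Q$ in $\qud{4,c}{\D}$, the requirement $q_1 = \id_{\D_1}$ forces $Q_1 = 0$ and $\partial_2 = 0$, so the diagram reduces to a $4$-dimensional reduced quadratic complex as defined in this section; conversely every such complex arises this way, viewed as a quadratic complex concentrated in degrees $\geq 2$. Morphisms then have $f_1 = 0$ automatically, and the $f_1$-equivariance plus compatibility conditions in $\qud{}{}$ collapse to those of $\rqud{}{}$. It remains to verify that the general homotopy of $4$-dimensional quadratic complexes from IV.4 of \cite{4dim}, together with the normalisation $\alpha_2 q_2 = 0$ and $\alpha_3 q_3 = 0$, specialises to the equations (\ref{homotopy1}) and the quadratic derivation property (\ref{homotopy2}).

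The main obstacle I expect is this last compatibility check. The quadratic correction $\omega'(\{-f_2 x + g_2 x\} \otimes \{f_2 y\})$ in (\ref{homotopy2}) must be identified with the non-additivity of $\alpha_2$ forced on it by taking values in the $\mathrm{nil}(2)$-group $Q_3'$; this is a bookkeeping computation using the Peiffer commutator identity $\langle x, y \rangle = -x - y + x + y^{d(x)}$ together with the defining property $\partial_3 \omega = w$ of the quadratic map. Once this identification of categories and homotopies is in place, the properties of $\tau$ being full, representative, and reflecting isomorphisms transfer for free from Theorem~\ref{CW14}, giving the required detecting functor into $\quo{$\rqud{4,c}{\D}$}{$\simeq$}$.
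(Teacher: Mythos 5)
Your proposal is correct and follows the paper's own route: the paper obtains Theorem~\ref{CW14ast} precisely as the special case of Theorem~\ref{CW14} in which $\D_1=0$, observing that $\rqud{4,c}{\D}$ then sits inside $\qud{4,c}{\D}$ (and your observation that $q_1=\id$ forces $Q_1=0$, so the two categories in fact coincide here, is the right justification). The compatibility of the reduced homotopy relations, including the quadratic correction term in (\ref{homotopy2}), with the general homotopies of IV.4 in \cite{4dim} is exactly the bookkeeping the paper leaves implicit, so your flagged check is the appropriate one.
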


\section{Application: Self--Maps of $S^2\times S^2$ Fixing the Diagonal}
We conclude with an example simple enough to be described within the limitations of this paper but using the theory developed in an essential way. Consider the product $S^2 \times S^2$ of two $2$--dimensional spheres with diagonal $\Delta: D = S^2 \rightarrowtail S^2 \times S^2$. 

\begin{thm}\label{S2crossS2}
There are exactly 16 essential self--maps of $S^2 \times S^2$ fixing the diagonal, $D$, that is, the set $[S^2 \times S^2, S^2 \times S^2]^D$ of homotopy classes relative $D$ of self--maps of $S^2 \times S^2$ has exactly 16 elements.
\end{thm}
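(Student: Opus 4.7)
The plan is to reduce the count to extensions of $\mathrm{id}_{S^2}$ along the diagonal $\Delta$, and then to carry out a cell--by--cell obstruction computation using the relative CW--structure of $(S^2\times S^2,\Delta(S^2))$ furnished by the theory of the preceding sections.

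Writing $f=(p_1 f, p_2 f)$ with $p_i\colon X\to S^2$ the two projections, the condition $f\circ\Delta=\Delta$ becomes $(p_if)\circ\Delta=\mathrm{id}_{S^2}$ for $i=1,2$, and homotopies rel $D$ split coordinatewise. Hence $[X,X]^D\cong N\times N$ with $N:=[X, S^2]^D_{\mathrm{id}}$, so it suffices to show $|N|=4$. The normal bundle of the diagonal in $S^2\times S^2$ is $TS^2$, which has Euler number $2$, so the cofiber $X/D$ is the Thom space $\mathrm{Th}(TS^2)\simeq S^2\cup_{2\eta}e^4$. Thus $(X,D)\in\CW{1,4}{D,\ast}$ has one $2$--cell and one $4$--cell (and no $3$--cell). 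The $2$--cell attaches trivially (since $\pi_1 D=0$), giving $X^2\simeq D\vee S^2$; I choose a basis $\{a,b\}$ of $\pi_2(X^2)=\Z^2$ with $a=[\Delta]=\iota_1+\iota_2$ and $b=\iota_2$ (so $\iota_1=a-b$). In this basis, the $4$--cell attaches via
\[
\phi \;=\; [\iota_1,\iota_2] \;=\; [a-b,b] \;=\; [a,b]-2\eta_b \;\in\; \pi_3(X^2) \;=\; \Z\eta_a\oplus\Z\eta_b\oplus\Z[a,b].
\]

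Extensions of $\mathrm{id}_D$ over $X^2$ form a $\Z$--family $g_k$ parametrized by $k=\deg(g_k|_b)$, and the obstruction to extending $g_k$ across the $4$--cell is $(g_k)_\ast\phi\in\pi_3(S^2)=\Z$. Using $(g_k)_\ast a=1$, $(g_k)_\ast b=k$, bilinearity of the Whitehead product with $[\iota,\iota]=2\eta$, and the classical identity $h_\ast\eta=d^2\eta$ for a degree--$d$ self--map $h$ of $S^2$, I get
\[
(g_k)_\ast\phi \;=\; [1,k]-2k^2\eta \;=\; 2k\eta - 2k^2\eta \;=\; 2k(1-k)\eta,
\]
which vanishes precisely when $k\in\{0,1\}$; these are the two classes realized on $X^2$ by the projections $p_1$ and $p_2$. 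For each admissible $k$ the set of extensions across the $4$--cell is a torsor over $\pi_4(S^2)=\Z/2$. Distinct values of $k$ give inequivalent elements of $N$ already on $X^2$, and if the $\pi_4$--action is also free on $N$, this yields $|N|=2\cdot 2=4$ and $|[X,X]^D|=16$.

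The main obstacle is the freeness of the $\pi_4(S^2)$--action: the two extensions of a fixed $g_k$ must remain distinct modulo homotopy rel $D$ on $X$. This is a secondary obstruction statement, the relevant obstruction living in $H^5\bigl(\Sigma(X/D);\pi_4(S^2)\bigr)\cong\Z/2$, where $\Sigma(X/D)=S^3\cup_{2\eta_3}e^5$ and the top cell detects the $\pi_4$--difference. Equivalently, one can run the entire count inside the algebraic framework of Theorem~\ref{CW14ast}: the $3$--type of $S^2$ is represented by the reduced quadratic module $\D$ with $\D_2=\Z$, $\D_3=\Z$ and $\omega=\cdot 2$, and the four classes in $N$ appear as the four $\mathrm{id}_\D$--extensions of a cofibrant presentation of $X$ in $\rqud{4,c}{\D}/\simeq$, placing the result within the general machinery developed in the paper.
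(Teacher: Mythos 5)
Your reduction to $|N|=4$ with $N=[S^2\times S^2,S^2]^D$, and your computation of the orbit count via the minimal relative CW structure $D\cup e^2\cup e^4$ with attaching map $\phi=[a,b]-2\eta_b$, are correct and follow a genuinely different, more classical route than the paper's. The paper instead replaces $(S^2\times S^2,D)$ by the mapping cylinder $Z$ of the comultiplication $S^2\to S^2\vee S^2$, which has cells in dimensions $2$, $3$ and $4$, and computes the orbit set $\tau[Z,S^2]^D$ inside the category $\rqud{4,c}{\D}$ of reduced quadratic complexes via Theorem \ref{CW14ast}; your condition $2k(1-k)=0$ is the exact counterpart of the paper's equations $2a(1-a)=2b(1-b)=0$, which there are derived from compatibility of $H_\ast(f)$ with the intersection form. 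Your version buys elementarity; the paper's version illustrates the algebraic machinery it has built.

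However, the proof is incomplete at exactly the point you flag: the freeness of the $\pi_4(S^2)=\Z/2$ action on $N$. Saying that ``the relevant obstruction lives in $H^5(\Sigma(X/D);\Z/2)\cong\Z/2$'' does not show that the two extensions of $g_k$ across the $4$--cell remain distinct up to homotopy rel $D$; for that you must show the isotropy group of the action is trivial, i.e.\ that the indeterminacy arising from self--homotopies of $g_k$ on $X^2$ --- the image of $\pi_3(S^2)\to\pi_4(S^2)$ under composition with $\Sigma\phi=-2\eta_3$ together with the Whitehead--product correction terms --- vanishes. This is a genuine computation (it rests on $\eta\circ 2\eta_3=0$ and $[\eta,\iota]=0$ in $\pi_4(S^2)=\Z/2$), and without it your count could a priori give $|N|=2$ or $3$. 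The paper settles it by noting that the forgetful map $[S^2\times S^2,S^2\times S^2]^D\to[S^2\times S^2,S^2\times S^2]^{\ast}$ is equivariant and that the action on the target is free because the Whitehead products $[\alpha,\beta]$ vanish for $\alpha\in\pi_3(S^2)$, $\beta\in\pi_2(S^2)$, citing (3.3.15) in \cite{OT} and the isotropy formula V(5.7) in \cite{AH}. Note also that your fallback of ``running the entire count inside Theorem \ref{CW14ast}'' would not close the gap by itself: the detecting functor $\tau$ only computes the orbit set of the $\pi_4$--action (Addendum \ref{orbits}), so the factor of $2$ per orbit still requires the freeness argument.
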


The remainder of this Section is devoted to the proof of Theorem \ref{S2crossS2}.

There are four canonical self--maps of $S^2 \times S^2$ which fix the diagonal, namely the identity, $I$, the interchange map, $T$, $P' = \Delta \circ \text{pr}_1$ and $P'' = \Delta \circ \text{pr}_2$, where $\text{pr}_i: S^2 \times S^2 \rightarrow S^2$ is the projection onto the $i$--th factor for $i = 1, 2$. These maps form a monoid $\M$ with multiplication table
\[\begin{array}{|c||c|c|c|c|}
\hline 
\phantom{x} & I & T & P' & P'' \\ \hline \hline
I & I & T & P' & P'' \\ \hline 
T & T & I & P' & P'' \\ \hline 
P' & P' & P'' & P' & P'' \\ \hline 
P'' & P'' & P' & P' & P'' \\ \hline
\end{array}\]
The homotopy group $\pi_4(S^2 \times S^2) = \Z_2 \oplus \Z_2$ is an $\M$--bimodule with action given by
\[\begin{array}{lll}
T^{\ast}(x,y) = (x,y) & T_{\ast}(x,y) = (y,x) \\
P'^{\ast}(x,y) = 0 & P'_{\ast}(x,y) = (x,x) \\
P''^{\ast}(x,y) = 0 & P''_{\ast}(x,y) = (y,y)
\end{array}\]
for $(x,y) \in \Z_2 \oplus \Z_2$. We define the monoid $\overline{\M} = \M \times (\Z_2 \oplus \Z_2)$ by the multiplication
\[(m,(x,y))\circ (m',(x',y')) = (mm',(m_{\ast}(x',y') + m'^{\ast}(x,y)),\]
that is, $\overline{\M}$ is a split linear extension of $\M$.

\begin{thm}
The set $[S^2\times S^2, S^2 \times S^2]^D$ together with composition of maps is a monoid ismomorphic to $\overline{\M}$. The group of homotopy equivalences under $D$, $\text{Aut}(S^2 \times S^2)^D$, is the group of units in $\overline{\M}$, given by the semi--direct product $\Z_2 \times (\Z_2 \oplus \Z_2)$, with $\Z_2$ generated by $T$.
\end{thm}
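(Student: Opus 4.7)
The plan is to identify $[X, X]^D$ with $\overline\M$ as a monoid. I decompose $[X, X]^D = ([X, S^2]^\Delta)^2$ using that $f = (f_1, f_2)$ with each $f_i \circ \Delta = \mathrm{id}_{S^2}$, compute $[X, S^2]^\Delta$ by obstruction theory on the standard CW-decomposition $X = (S^2 \vee S^2) \cup_{[\iota_1, \iota_2]} e^4$, and finally verify the composition formula for $\overline\M$ and read off the units.

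For $g: X \to S^2$, the restriction to $S^2 \vee S^2$ has bidegree $(\alpha, \beta) \in \Z^2$, and $g$ extends over the 4-cell iff $(\alpha, \beta)_*[\iota_1, \iota_2] = \alpha \beta \cdot [\iota, \iota] = 2 \alpha \beta \eta = 0$ in $\pi_3(S^2) = \Z$, i.e., $\alpha \beta = 0$. The condition $g \circ \Delta \simeq \mathrm{id}_{S^2}$ forces $\alpha + \beta = 1$ on $H^2$. Together, $(\alpha, \beta) \in \{(1, 0), (0, 1)\}$, and for each such bidegree, extensions to the 4-cell form a torsor over $\pi_4(S^2) = \Z_2$ whose action is free on under-$\Delta$ classes (by Addendum \ref{orbits}, since the relevant isotropy $[\Sigma X'', S^2]^\Delta$ vanishes for connectivity reasons). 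Thus $|[X, S^2]^\Delta| = 4$ and $|[X, X]^D| = 16$, establishing Theorem \ref{S2crossS2}. The four canonical maps $I, T, P', P''$ correspond to the pairs of bidegrees $\{(1,0), (0,1)\}^2$; writing a general element as $f = m + \xi$ with $m \in \M$ and $\xi \in \pi_4(X) = (\Z_2)^2$ yields the set-theoretic bijection $[X, X]^D \leftrightarrow \M \times \pi_4(X)$.

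To verify the composition formula, use the 4-cell coaction $\mu: X \to X \vee S^4$. The standard distributivity for cogroup coactions gives $(m + \xi)(m' + \xi') = mm' + m_*(\xi') + (m')^*(\xi)$, where $m_*$ denotes post-composition and $(m')^*$ denotes pre-composition, which on $\pi_4(X)$ equals multiplication by the $H_4$-degree of $m'$ reduced mod $2$. Computing the degrees: $\deg_4(I) = 1$; $\deg_4(T) = +1$, since interchange of two even-dimensional factors preserves the orientation $\omega_1 \wedge \omega_2$; and $\deg_4(P') = \deg_4(P'') = 0$, since both factor through $S^2$. Hence $I^* = T^* = \mathrm{id}$ and $P'^* = P''^* = 0$, matching the right action in the paper. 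The left action by post-composition is immediate: $T_*(x, y) = (y, x)$, $P'_*(x, y) = (x, x)$, $P''_*(x, y) = (y, y)$. The resulting product matches $\overline\M$ exactly, so $[X, X]^D \cong \overline\M$ as monoids.

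For the units: $(m, \xi)$ is invertible iff $m$ is a unit in $\M$, iff $m \in \{I, T\}$ (since $P'$ and $P''$ are idempotents without inverse). For such $m$, any $\xi$ yields a unit (the inverse is found using $m^2 = I$), so $|\mathrm{Aut}(X)^D| = 8$. Conjugation by $(T, 0)$ in $\overline\M$ acts on $\pi_4(X)$ as $T_* \circ T^* = T_*$, the flip $(x, y) \mapsto (y, x)$, yielding the semidirect product $\Z_2 \ltimes (\Z_2 \oplus \Z_2)$ with $\Z_2$ generated by $T$. The main obstacle is the right action $(m')^*$ in the composition formula: it requires the coaction-naturality identity together with the $H_4$-degree calculation, and is the source of the non-trivial bimodule structure. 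The obstruction analysis reducing $(\alpha, \beta) \in \Z^2$ to four possibilities via the Whitehead product is the other key step.
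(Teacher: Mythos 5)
Your identification of the monoid structure --- the decomposition $[X,X]^D=[X,S^2]^D\times[X,S^2]^D$ for $X=S^2\times S^2$, the distributivity formula $(m+\xi)(m'+\xi')=mm'+m_{\ast}\xi'+\deg_4(m')\,\xi$ coming from the coaction on the top cell, the degree computations giving the bimodule structure of $\overline{\M}$, and the determination of the units --- is correct and is essentially the paper's own setup of $\overline{\M}$ and of the inclusion $\overline{\M}\subset[X,X]^D$. The divergence, and the gap, lies in the counting step $|[X,S^2]^D|=4$. You run obstruction theory on the absolute cell structure $(S^2\vee S^2)\cup_{[\iota_1,\iota_2]}e^4$ and impose the condition $g\circ\Delta\simeq\mathrm{id}$ afterwards. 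But this cell structure is not a relative CW--structure for the pair $(X,\Delta(S^2))$, so the invariants you extract (bidegree and $4$--cell extension class) are invariants of the \emph{absolute} homotopy class; what you compute is at best the image of the forgetful map $[X,S^2]^D\to[X,S^2]$, not $[X,S^2]^D$ itself. The fibres of that forgetful map are orbits of $\pi_1(\mathrm{Map}(S^2,S^2),\mathrm{id})$, equivalently of the set of choices of null--homotopy data over a relative $3$--cell; nothing in your argument shows these fibres are singletons, and a priori the answer could be $8$ rather than $4$, which would ruin the surjectivity of $\overline{\M}\rightarrow[X,X]^D$. This is exactly where the paper's machinery does real work: $(X,D)$ is replaced by the mapping cylinder $Z$ of the comultiplication, a relative CW--complex with $2$--cells $e',e''$, a $3$--cell $e_3$ with $\partial_3e_3=-e+e'+e''$ and a $4$--cell $e_4$, and the detecting functor of Theorem \ref{CW14ast} reduces the count to homotopy classes of retractions $Q\rightarrow\D$ of reduced quadratic complexes; the ambiguity your argument ignores appears there as the choice $g_3(e_3)=r\,\omega(e\otimes e)$, $r\in\Z$, and is killed by the explicit homotopy $\alpha_2$ with $\alpha_2(e')=r\,\omega(e\otimes e)$.

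A secondary point: the freeness of the $\pi_4(S^2)$--action is not ``for connectivity reasons'' ($\pi_4(S^2)=\Z_2\neq0$), and Addendum \ref{orbits} only identifies the orbits of the action, not the isotropy. Both your torsor claim for the $4$--cell extensions and the freeness on under--$D$ classes require the vanishing of the Whitehead products $[\alpha,\beta]$ for $\alpha\in\pi_3(S^2)$, $\beta\in\pi_2(S^2)$, which the paper quotes from (3.3.15) of \cite{OT}; this is a genuine computation in $\pi_4(S^2)$, not a formal consequence of dimensions. With these two points repaired --- either by supplying the $\pi_1$ argument for the forgetful map or by passing to the relative cell structure as the paper does --- your outline does yield the theorem.
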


Choosing a point $\ast \in S^2 \times S^2$ in the complement of the diagonal, there is a closed ball $B^4$ such that
\[\ast \in B^4 \subset S^2 \times S^2 \setminus \Delta(D).\]
Now pinch the boundary of $B^4$ to obtain the coaction map
\[\mu: S^2 \times S^2 \longrightarrow (S^2 \times S^2) \vee S^4.\]
For $\alpha: S^4 \rightarrow S^2 \times S^2$ and a self--map $f$ of $S^2 \times S^2$ fixing the diagonal, the map $f+\alpha = (f,\alpha)\mu$ is again a self--map of $S^2 \times S^2$ fixing the diagonal. Thus $\mu$ yields an action of $\pi_4(S^2 \times S^2)$ on $[S^2\times S^2, S^2 \times S^2]^D$. We consider the equivariant forgetful map
\[[S^2\times S^2, S^2 \times S^2]^D \longrightarrow [S^2\times S^2, S^2 \times S^2]^{\ast}.\]
Note that the action is free on the right hand side, as elements $\alpha \in \pi_3(S^2)$ and $\beta \in \pi_2(S^2)$ have trivial Whitehead product $[\alpha,\beta] = 0$, see (3.3.15) in \cite{OT}. Thus $\pi_4(S^2 \times S^2)$ also acts freely on $[S^2\times S^2, S^2 \times S^2]^D$. Hence the inclusion $\M \subset [S^2\times S^2, S^2 \times S^2]^D$ of monoids implies that there is an inclusion $\overline{\M} \subset [S^2\times S^2, S^2 \times S^2]^D$ of monoids. It remains to show that this inclusion is, in fact, an isomorphism of monoids and here the theory described in this paper plays an essential r{\^o}le.

Given a self--map $f$ of $S^2 \times S^2$ fixing the diagonal, we study the properties of the map $H_{\ast}(f)$ induced on the homology groups $H_{\ast} = H_{\ast}(S^2 \times S^2)$. Let $e_4 \in H_4 = \Z$ and $e', e'' \in H_2 = \Z \oplus \Z$ be the generators and suppose $H_4(f)(e_4) = k e_4$ with $k \in \Z$. As $H_{\ast}(f)$ is compatible with the intersection form, which maps $e_4$ to $\omega = e'\otimes e'' + e'' \otimes e' \in H_2 \otimes H_2$, we conclude $(H_2(f)\otimes H_2(f))(\omega) = k \omega$. On the other hand, $H_2(f)(e'+e'') = e'+e''$ or $H_2(f)(e') = -H_2(e'') + e'+e''$, as $f$ fixes the diagonal. Putting $H_2(f)(e') = ae' + be''$ with $a, b \in \Z$, a simple calculation yields
\begin{eqnarray*}
2a(1 - a) & = & 0 \\
2b(1 - b) & = & 0 \\
a + b - 2ab & = & k.
\end{eqnarray*}
Thus $a, b \in \{0, 1\}$ and hence $H_2(f) = m_{\ast}$ for some $m \in \M$. 

The product structure of $S^2 \times S^2$ implies
\[ [S^2 \times S^2, S^2 \times S^2]^D = [S^2 \times S^2, S^2]^D \times [S^2 \times S^2, S^2]^D.\]
We now replace $(S^2 \times S^2, D)$ by a simpler relative CW--complex. There is a homotopy $H: \Delta \simeq \mu$ from the diagonal $\Delta$ to the comultiplication $\mu: S^2 \rightarrow S^2 \vee S^2 \subset S^2 \times S^2$. Let $Z$ be the mapping cylinder of $\mu$. Then $H$ yields a map $\overline H: Z \rightarrow S^2 \times S^2$ under $D$, which is a homotopy equivalence in $\textbf{Top}$. By Corollary II 2.21 in \cite{AH}, $\overline H$ is also a homotopy equivalence in $\textbf{Top}^D$ and hence
\[[Z, S^2]^D = [S^2 \times S^2]^D.\]
As $Z$ is a relative CW--complex with cells in dimensions $2, 3$ and $4$, we may apply Theorem \ref{CW14ast} which computes the orbits of the action of $\pi_4(S^2)$ on $[Z, S^2]^D$ by Theorem \ref{orbits}. We must show that the set of orbits, $\tau[Z, S^2]^D$, has exactly two elements.

The functor $\tau$ maps a homotopy class in $[Z, S^2]^D$ to a retraction $Q \rightarrow \D$ of the cofibration $\D \rightarrowtail Q$ associated with the relative CW--complex $(Z, D)$. The reduced quadratic module $\D$ of $D = S^2$ is given by $\D_2 = \Z = \langle e \rangle, \D_3 = \Z = \langle \omega(e \otimes e) \rangle, \partial_3 = 0$ and $\omega$ an isomorphism. The group $Q_2$ is the free $\text{nil}(2)$--group with generators $e, e'$ and $e''$ corresponding to the $2$--cells of $Z$. For $e_3 \in Q_3$ and $e_4 \in Q_4$ corresponding to the $3$-- and $4$--cell of $Z$, respectively, we obtain $\partial_3(e_3) = -e + e' + e''$ and $\partial_4(e_4) = \omega(e' \otimes e'' + e'' \otimes e')$. Together with the cofibration $\D \rightarrowtail Q$, this determines $Q$ completely.

We must show that there are two homotopy classes of retractions $g: Q \rightarrow \D$ of $\D \rightarrowtail Q$. By the calculations above, $H_2(g) = H_2(\text{pr}_i)$ with $i = 1$ or $2$. First we assume $i = 1$ and show that any retraction $g$ with $H_2(g) = H_2(\text{pr}_i)$ must be homotopic under $\D$ to the retraction $f: Q \rightarrow \D$ defined by $f_2 =  \tau(\text{pr}_1)_2$ and $f_3(e_3) = 0$. As $H_2(g) = H_2(\text{pr}_1)$ implies $g_2 = \tau(\text{pr}_1)_2$, we obtain $f_2(e') = e$ and $f_2(e'') = 0$. Further, $f_2(e) = e$, since $f$ is a retraction. Hence 
\[ f_2 \partial_3(e_3) = f_2 (-e + e' + e'') = -f_2(e) + f_2(e') + f_2(e'') =  0 = \partial_3 f_3(e_3),\]
showing that $f$ is compatible with boundary maps, and hence a well--defined retraction of $\D \rightarrowtail Q$. Now let $g: Q \rightarrow \D$ be a retraction of $\D \rightarrowtail Q$ with $H_2(g) = H_2(f) = H_2(\text{pr})_1$. Then $g_2 = f_2$ and thus a map $\alpha_2: Q_2 \rightarrow \D_3$ defining a homotopy $\alpha: f \simeq g$ under $\D$ must be a homomorphism by (\ref{homotopy2}) ($\alpha_3 = 0$ as $f_4 = g_4 = 0$). If $g_3(e_3) = r \omega(e \otimes e)$ for $r \in \Z$, put $\alpha_2(e) = 0, \alpha_2(e') = r \omega(e \otimes e)$ and $\alpha_2(e'') = 0$. Then $-f_2 + g_2 = 0 = \partial_3 \alpha_2$ as $\partial_3 = 0$ in $\D$. Since $\partial_3 = 0$ in $\D$ and $\alpha_2$ is a homomorphism, we obtain 
\[ (\partial_3 \alpha_2 + \alpha_2 \partial_3)(e_3) = \alpha_2\partial_3(e_3) = \alpha_2(-e + e' + e'') = r \omega(e \otimes e) = (-f_2 + g_2)(e_2).\]
Finally, $-f_4 + g_4 = 0 = \alpha_3 \partial_4$, so that $\alpha: f \simeq g$ by (\ref{homotopy1}).

The case $i=2$ is treated analogously, and we conclude that $\tau[Z, S^2]^D$ contains exactly two elements, namely the homotopy classes $\{ \tau(\text{pr}_1)\}$ and $\{\tau(\text{pr}_2)\}$.

\end{document}